\definecolor{myred}{RGB}{183,18,52}
\definecolor{myyellow}{RGB}{254,213,1}
\definecolor{myblue}{RGB}{0,80,198}
\definecolor{mygreen}{RGB}{0,155,72}
\newcommand{\mS}{\mathcal{S}}
\newcommand{\mJ}{\mathcal{J}}
\newcommand{\mT}{\mathcal{T}}
\newcommand{\mC}{\mathcal{C}}
\newcommand{\mX}{\mathcal{X}}
\newcommand{\mY}{\mathcal{Y}}
\newcommand{\mZ}{\mathcal{Z}}
\newcommand{\mG}{\mathcal{G}}
\newcommand{\mF}{\mathcal{F}}
\newcommand{\mE}{\mathcal{E}}
\newcommand{\mB}{\mathcal{B}}
\newcommand{\mD}{\mathcal{D}}
\newcommand{\aA}{\mathbb{A}}
\newcommand{\DD}{\mathbb{D}}
\newcommand{\EE}{\mathbb{E}}
\newcommand{\ZZ}{\mathbb{Z}}
\newcommand{\CC}{\mathbb{C}}
\newcommand{\mfC}{\mathfrak{C}}
\newcommand{\mfD}{\mathfrak{D}}
\newcommand{\msD}{\mathscr{D}}
\newcommand{\QQ}{\mathbb{Q}}
\newcommand{\RR}{\mathbb{R}}
\newcommand{\TT}{\mathbb{T}}
\newcommand{\Diff}{\mbox{Diff}}
\newcommand{\w}{\omega}
\newcommand{\cod}{\mbox{cod}}
\newtheorem{thm}{Theorem}[section]
\newtheorem{dfn}[thm]{Definition}
\newtheorem{cor}[thm]{Corollary}
\newtheorem{lma}[thm]{Lemma}
\newtheorem{prp}[thm]{Proposition}
\newtheorem{rmk}[thm]{Remark}
\newtheorem{cnd}{Condition}
\begin{document}
\title[Symplectic $(-2)$-sphere and the Symplectomorphism group]{Symplectic $(-2)$-spheres and the symplectomorphism group of small rational 4-manifolds}
\author{ Jun Li, Tian-Jun Li}
\address{Department  of Mathematics\\  University of Michigan\\ Ann Arbor, MI 48109}
\email{lijungeo@umich.edu}
\address{School  of Mathematics\\  University of Minnesota\\ Minneapolis, MN 55455}
\email{tjli@math.umn.edu}
\date{\today}
\begin{abstract}
 Let $(X,\w)$ be a symplectic rational surface. We study the space of tamed almost complex structures $\mJ_{\w}$ using a fine decomposition via smooth rational curves and a relative version of the infinite dimensional  Alexander-Pontrjagin duality.  This decomposition provides new understandings of both the variation  and stability of the symplectomorphism group
 $Symp(X,\w)$ when deforming $\w$.
 In particular, we compute  the rank of
 $\pi_1(Symp(X,\w))$ with  $\chi(X)\leq 7$
 in terms of   the number $N_{\w}$ of $(-2)$-symplectic sphere classes.
 \end{abstract}
\maketitle
\setcounter{tocdepth}{2}

\section{Introduction}\label{Intro}
Let $(X,\omega)$ be a closed simply connected symplectic manifold. The symplectomorphism group with the standard $C^{\infty}$-topology,  denoted by $Symp(X,\omega)$, is an infinite dimensional Fr\'echet Lie group.
Let $\mJ_{\omega}$ be the contractible Fr\'echet manifold of $\omega$-tamed almost complex structures.  Finding  a suitable  decomposition  of $\mJ_\w$  invariant under the natural action of $Symp(X, \omega)$ or its Torelli subgroup has proved useful in probing the topology of $Symp(X, \omega)$ when $(X, \omega)$ is a symplectic rational surface (\cite{Gro85,Abr98,AM00,LP04, AP13}, etc).  However, the analysis is usually hard even if $X$ is relatively simple (\cite{Anj02, AP13}).

Among homotopy groups of $Symp(X, \w)$, $\pi_0(Symp(X,\w))$ and $\pi_1(Symp(X,\w))$ are the most interesting ones.  $\pi_0(Symp(X,\w))$ is often called the {symplectic mapping class group}.
$\pi_1(Symp(X,\w))$ is tied to the Hofer geometry of $Symp(X,\w)$ (cf. \cite{Pol02}) and quantum cohomology (cf.\cite{Sei97}).

Note that $Symp(X,\omega)=  Symp_h(X,\w) \rtimes G(X,\omega)$,  where $Symp_h(X,\w)$ is  the homologically trivial part of $Symp(X,\omega)$  (the Torelli part), and  $G(X,\omega)$ is the image of the induced homomorphosm from $Symp(X,\omega)$ to $Aut[H^2(X,\ZZ)]$.   We also have $\pi_0 (Symp(X,\omega))=  \pi_0 (Symp_h(X,\w)) \rtimes G(X,\omega),$ and $\pi_k( Symp(X,\omega))=  \pi_k (Symp_h(X,\w)), \forall k\geq 1.$
It is shown in  \cite{LW12, She10}  that $G(X,\omega)$ is a reflection group generated by Dehn twists along Lagrangian spheres.
In fact, for a symplectic  rational surface $(X, \w)$  with $ \chi(X)\leq 11,$  we observe in
Lemma \ref{roots} and  section \ref{chileq5} that the collection of Lagrangian sphere classes form a root system, denoted  by $\Gamma_L(X_k,\w)$, and $G(X,\omega)$ is just the Weyl group of $\Gamma_L(X_k,\w)$.
Further, when  $\chi(X)\leq 7$, $Symp_h(X,\omega)$ is always path connected (\cite{LLW15}). Hence we know $\pi_0(Symp(X,\omega))=G(X,\omega)$ is just the Weyl group of the root system $\Gamma_L(X_k,\w)$.

We aim to  relate  $\pi_0(Symp(X,\w))$ and $\pi_1(Symp(X,\w))$ of a rational surface $X$, the codimension $0$ and $2$ pieces of a new decomposition of $\mJ_{\w}$,  and self-intersection $(-2)$-symplectic spheres  in symplectic rational surfaces.  In this paper, we focus on small rational surfaces with $\chi(X)\leq 7$.  In our upcoming papers \cite{LLW16} and \cite{LLW3} we will  extend our program to symplectic rational $4$-manifolds with larger Euler numbers.

\subsection{A fine decomposition of \texorpdfstring{$\mJ_{\omega}$}{lg} via symplectic spheres}
When $(X,\omega)$ is a  symplectic $4$-manifold, we introduce in Section 2 the  following decomposition of $\mJ_{\omega}$ via embedded $\w$-symplectic spheres of  self-intersection at most $-2$.

Let $\mathcal S_{\omega}$ denote the set of homology classes  of
  embedded $\omega$-symplectic spheres and $K_{\w}$ the symplectic canonical class. For any $A\in \mathcal S_{\omega}$, by the adjunction formula,
  \begin{equation}\label{AF}
K_{\omega}\cdot A=-A\cdot A -2.
\end{equation}
For any integer $q$, let $$\mathcal S_{\omega}^{\geq q},  \quad \mathcal S_{\omega}^{>q}, \quad   \mathcal S_{\omega}^{q}, \quad  \mathcal S_{\omega}^{\leq q},\quad  \mathcal S_{\omega}^{< q}$$
 be the subsets of $\mathcal S_{\omega}$ consisting of classes with square $\geq q, >q, =q, \leq q, <q$ respectively.
For each $A\in  \mathcal S_{\omega}^{<0}$ we associate the integer
$$ {\cod_A}=2(-A\cdot A-1).$$

Then we define the prime subset $\mJ_{\mC}$ labelled by a set $\mC\subset \mathcal S_{\omega}^{\leq-2}$ as following:

 \begin{dfn} \label{fine decomposition}
  A subset  $\mC\subset \mathcal S_{\omega}^{\leq-2}$ is called admissible if
  $$\mC=\{A_1,\cdots, A_i,\cdots  |\,\, A_i\cdot A_j \geq 0, \quad \forall  i\neq j\}.$$ Given an admissible subset $\mC$, we  define the real codimension
  of the label set ${\mC}$ as  $$\cod({\mC})= \sum_{A_i\in \mC} \cod_{A_i}=\sum_{A_i\in \mC}  2(-A_i\cdot A_i-1).$$ Define the {\bf prime subset}
  $$\mJ_{\mC}:=\{ J\in \mJ_{\omega}|  A\in \mathcal S_{\w}^{\leq-2} \hbox{  has an embedded $J$-holomorphic representative if and only if } A\in \mC\}.$$
 The prime subset  $\mJ_{\emptyset}$   is generally denoted by  $\mJ_{open}$. And if $\mC=\{A\}$ contains only one class $A$, we will use $\mJ_A$ for $\mJ_{\{A\}}$.
 \end{dfn}

  Notice that these prime subsets are disjoint and
 we have the decomposition $\mJ_{\omega} =\amalg_{\mC} J_{\mC}.$
We define a filtration according to the codimension of these prime subsets:
 $$
 \cdots \subset \mX_{2n+2}(= \mX_{2n+1}) \subset
 \mX_{2n}(=\mX_{2n-1}) \subset \ldots \subset \mX_2 (= \mX_1) \subset \mX_0 = \mJ_{\omega},
$$
  where $\mX_{j}:= \amalg_{\cod({\mC} )\geq j} \mJ_{\mC}$ is the union of all  prime subsets having codimension no less than $j$.

  To ensure nice properties of the decomposition into prime subsets we introduce the following condition for  $(X,\omega)$.

 \begin{cnd}\label{negsphemb} Let $(X, \w)$ be a symplectic $4-$manifold.
Suppose $A$ is a homology class in $H_2(X;\ZZ)$ with $A\cdot A<0$. Whenever
 $A$ is represented by  a simple $J$-holomorphic map
 $u:\CC P^1 \rightarrow (X, J)$ for some tamed $J$,  then $u$ is an embedding.
 \end{cnd}

 We note  that by \cite{Zha17}, Condition \ref{negsphemb} holds for symplectic rational surfaces with Euler number no larger than 11
  (Lemma \ref{rat1}).
We show that each $\mJ_{\mC}$ is  a submanifold with real codimension $\cod({\mC})$ under  Condition \ref{negsphemb}.
In Section 3, we prove  the following result  on $\mX_0, \mX_2,$ and $\mX_4$, which suffices for applications in this paper:

\begin{thm}\label{level 2}
For a symplectic rational surface with  $\chi(X)\leq 8$,
$\mX_4=\coprod_{\cod(\mC)\geq4} \mJ_{\mC}$ and $\mX_2=\coprod_{\cod(\mC)\geq2} \mJ_{\mC}$
 are closed subsets  in $\mX_0=\mJ_{\w}$. Consequently, $\mX_0 -\mX_4$ is an open  submanifold of $\mJ_{\omega}$ and $\mX_2-\mX_4$ is a closed  submanifold of  $\mX_0 -\mX_4$.





\end{thm}

  In \cite{L1} the first author will further prove the filtration is a stratification at every level for a symplectic rational surface with $\chi(X)\leq 8$.

 Inspired  by \cite{Abr98}, we apply a (relative) version of the Alexander-Pontrjagin duality in \cite{Eells61} to get the following computation of  $H_1(\mJ_{open};\ZZ)$.

\begin{cor}\label{Jopen}
For  a symplectic rational surface with  $\chi(X)\leq 8$ and any Abelian group $G$,
$H^1(\mJ_{open}; G)= \oplus_{A_i \in \mathcal S_{\omega}^{-2}} H^0(\mJ_{A_i}; G)$.

If we further assume that $\chi(X)\leq 7$, then  $\mJ_{A_i}$ is path connected  for  each  $ {A_i \in \mathcal S_{\omega}^{-2}}  $, and
$H_1(\mJ_{open}; \ZZ)=\ZZ^{N_{\w}}$, where $N_{\omega}$ is the cardinality of $\mathcal S_{\omega}^{-2}$.
\end{cor}




We apply Corollary \ref{Jopen} to study the topology of $Symp_h(X,\omega)$, where $X$ is a rational surface with $\chi(X)\leq 7$.
Notice that there is an action  of  $Symp(X, \w)$ on  $\mJ_{\w}$ which preserves the filtration. Moreover,  the subgroup  $Symp_h(X, \w)$ preserves  each prime subset
 since   $Symp_h(X, \w)$  preserves the homology classes of $J$-holomorphic curves. This action will be used in \cite{LLW16} and \cite{LLW3} to study the topology of $Symp_h(X,\omega)$  for a rational surface $X$ with $\chi(X)\geq 8$.

\subsection{Application to the symplectomorphism group}


For a symplectic rational surface $(X, \w)$ with $5\leq \chi(X)\leq 8$, the following diagram of  homotopy fibrations, formulated in \cite{Eva11} (in the monotone case, i.e. $[\w]=\lambda c_1(X,\w)$ for some $\lambda>0$ ) and adapted in \cite{LLW15}  for a general $\omega$, relates $\mJ_{open}$ and  $Symp_h(X,\omega)$:
\begin{equation} \label{summary}
\begin{CD}
Symp_c(U)\sim Stab^1(D) @>>> Stab^0(D) @>>> Stab(D) @>>> Symp_h(X, \omega) \\
@. @VVV @VVV @VVV \\
@. \mG(D) @. Symp(D) @. \msD_0 \sim \mJ_{open}
\end{CD}
\end{equation}
 Each term above is a topological group except the term $\msD_0 \sim \mJ_{open}$.
 Here $\sim$ means weak homotopy equivalence. We will  recall  each term in Section 4 and here we only explain the right tail of  diagram \eqref{summary}:
 \begin{equation}\label{right}
Stab(D)\to Symp_h(X, \omega)\to \msD_0 \sim \mJ_{open}.
\end{equation}

Here $D=\cup D_i$ is
a suitable exceptional divisor, which is a configuration of symplectic  spheres as in \cite{Eva11,LLW15}, with $[D_i]\in \mathcal S_{\omega}^{-1}$.  The space  $\msD_0$ of such configurations whose components intersect symplectic orthogonally is weakly  homotopic to
$\mJ_{open},$ and it admits a  transitive action of $Symp_h(X, \omega)$. Therefore we have the above homotopy fibration \eqref{right} with   $Stab(D)$ as the stabilizer of the transitive action.  Moreover,  the (weak) homotopy type of $ Stab(D)$ can often be explicitly computed using the terms of the other parts of diagram \eqref{summary}. Hence if we can further reveal the homotopy type of $\mJ_{open}$, which is very sensitive to the symplectic structure $\omega$, we may probe at least partially the homotopy type of $Symp_h(X, \omega)$ via the homotopy fibration \eqref{right}.

Following this route, the full homotopy type of  $Symp_h(X, \omega)$ in the monotone case  is determined in \cite{Eva11} when $\chi(X)=6,7,8$
(the smaller  $\chi$ cases follow from \cite{Gro85} and  \cite{Abr98,  LP04}). And we show  in \cite{LLW15} that $\pi_0(Symp_h)$ is trivial for $\chi(X)=7$  with a general $\omega$ (the smaller  $\chi$  cases have been dealt in  \cite{Abr98, AM00, LP04,AP13}). In addition, \cite{HPW13} treats similarly some  non-compact cases. In this paper we continue to follow this route and  systematically analyze the persistence and change of the topology of $Symp(X,\omega)$ under deformations of symplectic forms (such phenomena were also discussed  in \cite{Ruan93, Sei99} and \cite{McD08}).

\subsubsection{\texorpdfstring{ $\pi_1(Symp_h(X,\w))$}{Lg} and \texorpdfstring{$N_{\omega}$}{Lg}}

We are able to  relate the fundamental group  of $Symp_h(X,\w)$ with $N_{\w}$ for a rational surface $X$ with $\chi(X)\leq 7$.
On the one hand, for a rational surface $X$ with $\chi(X)\leq 7$,   we compute  $H_1(\mJ_{open};\ZZ)$ by counting $(-2)$-symplectic sphere classes  in Corollary \ref{Jopen}. On the other hand,  when  $5\leq \chi(X)\leq 7$,  we showed  in \cite{LLW15} that $Stab(D)$ is path connected for any $\w$.
Hence we have the following portion of the long exact sequence of fibration \eqref{right}:
\begin{equation}\label{sp1}
  \pi_1(Stab(D))\to \pi_1(Symp_h(X,\w))\to \pi_1(\msD_0) \to 1.
\end{equation}
Note that $\pi_1(Symp_h(X,\w))$ is Abelian since $Symp_h(X, \w)$ is a topological group.
Thus we can conclude  that  $ \pi_1(\msD_0)$ is Abelian since it is a quotient group of $\pi_1(Symp_h(X,\w))$ by \eqref{sp1}.
Since $\msD_0 \sim \mJ_{open}$, we have $\pi_1(\mJ_{open})=\pi_1(\msD_0)$ is also an Abelian group, hence isomorphic to
$H_1(\mJ_{open};\ZZ)$ by the Hurewicz theorem. Consequently,  $\pi_1(\msD_0)=\ZZ^{N_{\omega}}$.

We further examine the first map in \eqref{sp1}.
It turns out that, $\pi_1(Stab(D))$ is independent of $\w$ by Proposition \ref{stab} and is  isomorphic to $\pi_1(Symp_h(X,\omega_{mon}))$, where $\omega_{mon}$ is a monotone symplectic form. Moreover, Lemma \ref{Abel}  shows that $\pi_1(Stab(D))$  injects into
 $\pi_1(Symp_h(X,\w)).$ Since $\pi_1(\msD_0)= \ZZ^{N_{\omega}}$ is free Abelian,  $\pi_1(Symp_h(X,\w))$ is determined as follows, at least in the case $5\leq \chi(X)\leq 7$:

 \begin{thm}\label{sum}
 If $(X, \omega)$ is a symplectic rational surface with  $\chi(X)\leq 7$,
 \begin{equation}\label{sum formula}
 \pi_1(Symp_h(X,\omega))= \ZZ^{N_{\omega}} \oplus \pi_1(Symp_h(X,\omega_{mon})).
 \end{equation}
 \end{thm}

For the case $\chi(X)\leq 4$, we verify the relation \eqref{sum formula} directly from the known computations.

\subsubsection{Stability and variation along the normalized reduced symplectic cone}
Notice that diffeomorphic symplectic forms have homeomorphic symplectomorphism groups.
On the other hand,  for a rational surface $X$,   cohomologous symplectic forms are isotopic  (\cite{LM96}, \cite{LL01}). Therefore, up to homeomorphisms, $Symp(X, \omega)$  only depends on $[\omega]$. In particular, the homotopy type of $Symp(X, \omega)$ is determined by the point $[\w]$ in the symplectic cone of $X$. Recall that the symplectic cone of $X$ is the open subset of $H^2(X;\mathbb R)$ that consists of classes of symplectic forms.
The symplectic cone of $S^2\times S^2$ is just the first quadrant in $H^2(S^2\times S^2;\RR)$.
The symplectic cone of $X_k=\CC P^2  \# k{\overline {\CC P^2}}$ is characterized  (cf. \cite{Bir97,Bir01,LL01}) by
a set of
inequalities (infinitely many if $k\geq 2$).   However, after identifying $H^2(X_k;\mathbb R)$ as $\mathbb R^{k+1}$ via the canonical basis $\{H, E_1, \cdots, E_k\}$,
it is rather complicated to geometrically describe the symplectic cone in  $\mathbb R^{k+1}$.

Here is a useful observation.  It suffices to describe $Symp(X_k, \omega)$ when $[\w]$ is in a {\bf fundamental domain} of the symplectic cone under
 the homological action of $\Diff^+(X_k)$ and the scaling operation.

 For $X_k=\CC P^2  \# k{\overline {\CC P^2}}$,
the notion of a reduced class in Definition \ref{reduced} is useful for this purpose and  we use the normalization  $\w(H)=1$.
Hence,  for $X_k=\CC P^2  \# k{\overline {\CC P^2}}$,  we study the normalized reduced symplectic cone $P_k=P(X_k)$. Notice that we can identify each point in $P_k$  as a vector $(c_1,c_2,\cdots, c_k)\in \RR^k$, where $c_i$ denote the  $\w$-area of  $E_i$.
For $S^2\times S^2$, the normalized reduced symplectic cone can be similarly defined  and
$P(S^2\times S^2)$ is the interval $[1, \infty)\subset \mathbb R^1$.
When $\chi(X)\leq 11$, $P(X)$ is defined by finitely many (strict and non-strict) linear  inequalities in  $\RR^{\chi(X)-3}$,  hence a (generally, neither closed nor open) convex polytope of dimension $\chi(X)-3$.
 Notice that $P(X)$ is the disjoint union of its open faces of various dimensions, and notice that neighboring open faces have different dimensions.
 For instance, $P(S^2\times S^2)=[1, \infty)$ is the disjoint union of the unique one dimensional open face, $(1, \infty)$, and the unique zero open face, $\{1\}$.

Moreover,  for  $3 \leq k\leq 8$,  $P_k$ has a uniform description as   a cone with an open base in the
$c_1\cdots c_{k-1}$ hyperplane, with the point $M_k=(\frac{1}{3},\frac{1}{3},\cdots, \frac{1}{3})$  as the vertex and $k$ edges.   Notice that $M_k$ is the class of a monotone symplectic form on $X_k$. $P_k$ has a unique zero dimensional open face $M_k$, $k$ 1-dimensional open faces, each one being the interior of an edge,
and generally, $\binom{k}{p}$ p-dimensional open faces.
 It is interesting that the $k$ edges of $P_k$ have a  Lie theory interpretation as simple roots of a root system $R(X_k)$, well known to algebraic geometers.
We further observe that the root system $R(X_k), 3\leq k\leq 8,$ coincides with the Lagrangian root system  of $(X_k, \w_{mon})$, where $\w_{mon}$ is a monotone symplectic form  on $X_k$.

We observe that, when $\chi(X)\leq 7$,  both $\Gamma_L(X_k,\w)$ and $N_{\w}$ are stable on each open face of $P(X)$ and always  jump between neighboring open faces (necessarily of different dimensions) with the same ``amount of change".
This observation leads to the following  explicit stability and variation of $\pi_0 (Symp(X,\omega))$ and $\pi_1(Symp(X,\omega))$.

\begin{cor}\label{4Q}
For a  rational surface $X$ with  $\chi(X)\leq 7$, $\pi_0 (Symp(X,\omega))$ and $\pi_1(Symp(X,\omega))$ are stable on each open face of $P(X)$ and
vary between neighboring open faces. Moreover, the sum
$r^+[\pi_0 (Symp(X,\omega))]+  Rank [ \pi_1(Symp(X,\omega))]$ is  a  constant given by   $\frac{1}{2}(\chi(X)-2)(\chi(X)-3)$.

\end{cor}

Notice that in the above corollary, $\pi_1(Symp(X,\omega))$ is an Abelian group and
$Rank [ \pi_1(Symp(X,\omega))]$ denotes the  rank of its torsion-free part.  When $\chi(X)\leq 7$,  $\pi_0(Symp(X,\omega))$ is   the Weyl group  of the Lagrangian root system $\Gamma_L(X_k,\w)$,  and, by a slight abuse of notation,   $r^+[\pi_0 (Symp(X,\omega))]$ denotes  the number of positive roots of $\Gamma_L(X_k,\w)$.



\smallskip
  {\bf Acknowledgements:} We appreciate useful discussions with  Silvia Anjos, Martin Pinsonnault, Weiwei Wu, Weiyi Zhang. For $\CC P^2  \# 4{\overline {\CC P^2}}$, when $[\w]$ is in the  face $M_4OBC,$  Anjos-Eden in \cite{AE17} computed  the rational homotopy groups using toric method (see Remark \ref{anj}). Results in Section 3.1 overlap with results in Section 4.1 in  \cite{Zha17}, which are in a slightly different context. We thank the referees for their detailed and constructive comments, which greatly improved the exposition of this article.  The research is supported by NSF grant DMS-1611680.

\bigskip
{\bf List of some notations. }\\
$\bullet$ $Symp_h(X,\w)$: the  homological trivial part of symplectomorphism group of $(X,\w)$. \\
$\bullet$ $\mJ_{\w}$: the space of $w-$tamed almost complex structures.\\
$\bullet$ $\mC$, $\mD$ and $\mathfrak{D}$:
subsets of $ \mathcal S_{\omega}^{< 0}$.\\
$\bullet$ $\mX_{2n}$: the union of codimension no larger than $2n$ prime subsets of $\mJ_{\w}$.\\
$\bullet$ $\mJ^{\mfD}$: the set of almost complex structures $J$ where each class in $\mfD$ has an  embedded $J$-holomorphic curve representative.\\
$\bullet$ $N_{\w}$:  the number of $\omega$-symplectic $(-2)$-sphere classes.\\
$\bullet$   $N_{\w, L}$:  the number of  $\omega-$Lagrangian sphere classes up to  sign.\\
$\bullet$ $\Gamma_L(X,\w)$: the root system of Lagrangian sphere classes associated to a symplectic 4-manifold $(X,\w)$.\\
$\bullet$ $P(X)$: the normalized reduced symplectic cone of $X$, and $P_k:=P(X_k).$\\
$\bullet$ $M_k$: the (unique) monotone vertex of $P_k$ for $3\leq k \leq 8$.\\
$\bullet$ $r^+[\pi_0 (Symp(X,\omega))]$: the number of positive roots of the root system $\Gamma_L(X,\w)$.
\section{Decomposition of \texorpdfstring{$\mJ_{\w}$}{Lg} and the reduced symplectic cone when \texorpdfstring{$\chi\leq 11$}{Lg}}


In this section, we analyze the
 decomposition of $\mJ_{\omega}$ and describe the normalized reduced symplectic cone for symplectic rational surfaces with $\chi\leq 11$. We often identify the degree 2 homology with degree 2 cohomology using Poincar\'e duality. Also recall that throughout this section, all finite-dimensional symplectic manifolds are smooth, closed and connected.

\subsection {Decomposition of \texorpdfstring { $\mJ_{\omega}$}{Jw} }

\subsubsection{General facts on \texorpdfstring{$J$}{Lg}-holomorphic curves }
We review some general facts on $J$-holomorphic rational curves and symplectic spheres in symplectic 4-manifolds. The presentation is  similar to  \cite{Abr98} and \cite{AP13}.

Let $(X, \w)$ be a symplectic $4$-manifold and $J\in \mJ_{\w}$.
 A parametrized $J$-holomorphic  curve in $X$ is a $J$-holomorphic map $u:(\Sigma, j)\to (X, J)$, where $(\Sigma, j)$ is a smooth, connected Riemann surface.
We will always assume that $u$ is simple, i.e. it is non-constant and not a multiple covering. In this case, we say that $C=u(\Sigma)$ is an (unparameterized) $J$-holomorphic curve
and denote by $[C]$ the homology class. Notice that the pairing $\omega([C])$ is positive.



\begin{thm}
[Positivity of Intersection, \cite{Gro85, McD94,MW95}]\label{positive}
 For an almost complex  4-manifold $(X, J)$,
 two distinct simple  J-holomorphic  curves
 $C, C'$ have only finitely many intersection points. Each such point $p$ contributes
  $k_p\geq 1$  to the homological intersection number $[C]\cdot [C']$, and $k_p=1$ if and only if
$C$ and $C'$  meet transversally at $p$.
\end{thm}

\begin{thm}[Adjunction Inequality, \cite{McD90}]\label{adj ineq theorem}
Let $ (X, J)$ be an
almost complex 4-manifold with first Chern class $c_1(X, J)$ and $u: (\Sigma, j) \rightarrow X$ a simple $J-$holomorphic  curve.
Then the virtual genus of the image $C = u(\Sigma)$, defined as $g_v(C)= ([C]\cdot [C] -c_1(X, J)([C]))/2+1$, is a  integer no less than $g(\Sigma)$.
Moreover, $g_v(C)= g(\Sigma)$ if and only if $u$ is an embedding.
\end{thm}

We are primarily interested in rational curves.
A (parametrized) $J$-holomorphic rational curve in $X$ is a $J$-holomorphic map $u:\CC P^1\to (X, J)$.


\begin{thm}[Gromov Compactness Theorem for rational curves, \cite{Gro85}]\label{Gcpt}
 Let $(X, \omega)$
be a  symplectic 4-manifold, and let $J_n$ be a sequence in  $\mJ_{\w}$ which converges to $ J_{0} $ in the $C^{\infty}$-topology.
Suppose $u_n : \CC P^1 \rightarrow (X, J_n)$
 is a sequence of simple $J_n$-holomorphic rational curves such that $[u_n(\CC P^1)] = A \in H_2(X; \ZZ), A\ne0$.

 Then,  there is a subsequence of $\{u_n\}$, still denoted $\{u_n\}$,  which  either converges to a simple $J_0$-holomorphic rational curve in the class $A$ in the $C^{\infty}$-topology, or
 weakly converges to a stable $J$-holomorphic rational curve in the class $A$.

 The weak convergence of $\{u_n\}$  is described as follows.

\begin{itemize}

\item Up to a re-parametrization
of each $u_n$,  there  are finitely many  simple closed loops ${\gamma_i}$ in $\CC P^1$,
 and  a connected finite union of Riemann spheres $\Sigma_0 = \cup_{\alpha} \CC P^1_{\alpha}$
 which  is  obtained by
collapsing each of the simple closed curves ${\gamma_i}$ on $\CC P^1$ to a point.

 \item There is a continuous map $u : \Sigma_0 \rightarrow X$,
 called a {\bf stable rational curve} in the class $A$,  such that each $u|_{\CC P^1_{\alpha}}$ is a possibly multiply covered  $J_0$-holomorphic rational curve, i.e. $u|_{\CC P^1_{\alpha}}$ is the composition of a degree $m_{\alpha}$ covering
 $\CC P^1_{\alpha}\to \CC P^1$ and a simple $J_0$-holomorphic map $u'_{\alpha}:\CC P^1\to (X, J_0)$.
 Moreover,
  \begin{equation}\label{cusp curve equation}
 \sum_{\alpha} m_{\alpha} [u'_{\alpha}(\CC P^1_{\alpha})] = A.
 \end{equation}
Finally,  $\{u_n\}$ converges to $u$   in the complement of any fixed open neighborhood of $\cup_i \gamma_i$ in the $C^{\infty}$-topology.

\end{itemize}


\end{thm}

\subsubsection{General facts on symplectic spheres }

We next list various facts about representing a class in $\mS_{\w}$ by simple (or stable)  $J$-holomorphic rational curves.

For classes with self-intersection at least $-1$ we have the following well-known fact from the Gromov-Witten theory.
The following Proposition is Lemma 3.3 in \cite{LZ11}.

\begin{prp} \label{existence of curves}
Let $ (X, \omega)$ be a  symplectic 4-manifold and $A$ a class in $ \mathcal S_{\omega}^{-1}$ or $  \mathcal S_{\omega}^{\geq 0}$.   Then  there is
a simple or stable $J$-holomorphic rational curve in the class $A$ for any $J\in \mJ_{\w}$, and
by the adjunction inequality, any simple $J$-holomorphic curve in the class $A$ is embedded.

Moreover, if $\{B_i\}$ is any collection of  classes in  $ \mathcal S_{\omega}^{-1}$ or $  \mathcal S_{\omega}^{\geq 0}$,
then for a generic $J\in \mathcal J_{\omega}$ (in a subset of second category, see also Lemma \ref{open and -1}), there is an
 embedded $J$-holomorphic rational curve in each class $B_i$.
 Consequently, by the positivity of intersection, $B_i\cdot B_j\geq 0$ if $B_i\ne B_j$ are in $ \mathcal S_{\omega}^{-1}$ or $  \mathcal S_{\omega}^{\geq 0}$.
\end{prp}

On the other hand, for classes with negative self-intersection, we have the uniqueness by the positivity of intersection.

 \begin{lma} \label{uniquecurves}
Let $ (X, \omega)$ be a symplectic 4-manifold and $B$ a class in $ \mathcal S_{\omega}^{\leq -1}$.  If, for some $J\in \mJ_{\w}$,  there is a simple $J$-holomorphic  rational curve in the class $B$, then  there cannot be a distinct  simple $J$-holomorphic  curve or a stable $J$-holomorphic rational curve in $B$.
\end{lma}

 For classes with self-intersections at most $-2$ we have  the following fact  in
\cite{AP13} Appendix B.1:
\begin{prp}\label{stratum}
Let $(X,\omega)$ be a symplectic 4-manifold.
 Suppose $U_{\mC}\subset\mJ_{\omega}$ is a subset
characterized by the existence of a configuration of embedded
$J$-holomorphic  rational curves $C_{1}\cup C_{2}\cup\cdots\cup C_{N}$ of negative self-intersection with $\{ [C_{1}], [C_{2}],\cdots , [C_{N}]\}=\mC$.
Then $U_{\mC}$ is a co-oriented Fr\'echet submanifold
of $\mJ_{\omega}$ of (real) codimension $2N-2c_{1}([C_{1}]+\cdots+ [C_{N}])$.
\end{prp}

\begin{rmk}
Suppose $(X, \w)$ is a symplectic rational surface with $\chi(X)\leq 8$. The we will show in Proposition \ref{negative sphere}  that
the set $\mS_\w^{<0}$ is finite.  Hence   there are finite number of admissible sets and each admissible set $\mC$ is finite in this case.
It should follow from Section 4 in \cite{Zha17} that the finiteness of $\mS_\w^{<0}$ continues to hold when $\chi(X)\leq 11$.
\end{rmk}


\subsubsection {Condition \ref{negsphemb} and cusp curve decomposition}


We first recall  Condition  \ref{negsphemb} for a symplectic 4-manifold $(X,\omega)$:
Suppose $A$ is a  class in $H_2(X;\ZZ)$ with $A\cdot A<0$.
Whenever
 $A$ is represented by  a simple $J$-holomorphic map $u:\CC P^1 \rightarrow X$
  for some tamed $J$,  then $u$ is an embedding.

Note that, under this assumption, $\mathcal{S}_{\omega}^{<0}$ is the same as the set of homology classes with negative self-intersection and having a simple rational pseudo-holomorphic curve representative.
By  Proposition 4.2 in \cite{Zha17}, we have

\begin{lma}\label{rat1}
Condition \ref{negsphemb} holds true for a symplectic rational surface $(X, \w)$ with $\chi(X) \leq 11$.
\end{lma}

Condition \ref{negsphemb} has crucial consequences for cusp curve decompositions and the prime sets.


\begin{lma} \label{general cusp}
Assume Condition \ref{negsphemb} holds.
  Let $\mC$ be an admissible set and  $A\in \mathcal S_{\w}^{}$.  For  $J\in \mJ_{\mC}$,
   suppose  $A$ is represented by a stable rational curve.
  Then  the cusp decomposition of the class $A$ for $J\in \mJ_{\mC}$ is of the form:
  \begin{equation} \label{cusp curve}A=\sum_{\alpha}  r_{\alpha}[C_{\alpha}] + \sum_{\beta}p_{\beta} [C_{\beta}]+
  \sum_{\gamma}q_{\gamma} [C_{\gamma}],
  \end{equation}
   where $r_{\alpha}, p_{\beta}, q_{\gamma}$ are positive integers, $C_{\alpha}, C_{\beta}, C_{\gamma}$ are simple $J$-holomorphic rational curves with
 $[C_{\alpha} ]^2\leq-2, [C_{\beta}]^2=-1,  [C_{\gamma}]^2\geq 0$. Moreover,

\begin{itemize}
\item $C_\alpha$ is embedded and hence $[C_{\alpha}]\in \mC$.

\item $C_{\beta}$ is embedded and hence $[C_\beta]\in \mS_\w^{-1}$.

\item $A\cdot [C_{\gamma}]\geq 0$.

\item $A\ne [C_\alpha], A\ne [C_\beta], A\ne [C_\gamma]$ for any $\alpha, \beta, \gamma$.
\end{itemize}

\end{lma}

\begin{proof}
 The first and second bullets follow from Condition \ref{negsphemb}.

  Notice that,  each $[C_{\gamma}]$ has $[C_{\gamma}]^2\geq 0$, by the positivity of intersection,  $[C_{\gamma}]$ pairs positively with any curve class in the decomposition  of $A$. In turn, we have
 $A\cdot \sum_{\gamma}q_{\gamma} [C_{\gamma}] \geq 0$.

The last bullet follows from the positivity of area.
   \end{proof}

Now we examine the case  $A\in \mathcal S_{\w}^{-1}$.

\begin{dfn}\label{enhancement}
Given an admissible set $\mC \subset S_{\w}^{\leq-2}$, let
$S_{\w}^{-1}(\mC)=\{A\in S_{\w}^{-1}|A\cdot A_i\geq 0 \hbox{ for any $A_i\in \mC$}\}$.
\end{dfn}

\begin{dfn}\label{jd}
For a subset $\mfD \subset \mS_{\w}^{-1}$, let $\mJ^{\mfD}$ denote the set of $J\in \mJ_{\w}$ such that   there is  an embedded $J$-holomorphic rational curve
in  each class in $\mfD$.

\end{dfn}

Note that here $\mfD$ is different from the admissible label set $\mC$ in Definition \ref{fine decomposition} since  $\mC\subset \mS_{\w}^{\leq -2}.$
We will next relate the prime set $\mJ_{\emptyset}=\mJ_{open}$   with $\mJ^{\mfD}$ under Condition \ref{negsphemb}.

\begin{lma}\label{open and -1}  Assume Condition \ref{negsphemb} holds.
  Let $\mC$ be an admissible set and  $A\in \mathcal S_{\w}^{-1}$.
  If  $A$ is represented by a stable rational curve then
Then  the cusp decomposition \eqref{cusp curve} of the class $A$ for $J\in \mJ_{\mC}$ has the additional property:
$A\cdot[C_\beta]\geq 0$. Consequently, $A$ is represented by  an embedded $J$-holomorphic rational curve if $A\in S_{\w}^{-1}(\mC)$.

If    $J\in \mJ_{open}$, then

  \begin{itemize}

\item   There are no simple $J$-holomorphic rational curves with self-intersection less than $-1$.

\item  For any  $A\in \mS_{\w}^{-1}$, there  is an   embedded  $J$-holomorphic rational curve in $A$ (in fact, unique).

\item   $\mJ_{open}\subset \mJ^{\mfD}$ for any $\mfD \subset \mS_{\w}^{-1}$.

\item   $\mJ^{\mfD}=\mJ_{open}$ if every class in $\mS_{\w}^{\leq -2}$ pairs negatively with some element in $\mfD$.

  \end{itemize}

\end{lma}

\begin{proof} Suppose  $A\in \mathcal S_{\w}^{-1}$ and $A$ is represented by a stable rational curve.
By Lemma \ref{general cusp}, the cusp decomposition \eqref{cusp curve} of $A$ satisfies $A\cdot [C_\gamma]\geq 0$.
  Recall that  $A \in S_{\w}^{-1}$,  and  notice that  $A\ne [C_{\beta}]$ for any $\beta$ since otherwise  $A$ is represented by $C_{\beta}$.
  By Proposition \ref{existence of curves}, $A\cdot \sum_{\beta}p_{\beta} [C_{\beta}] \geq 0$.
If $A\in S_{\w}^{-1}(\mC)$ then $A\cdot[C_\alpha]\geq 0$.
Combining the three inequalities, we have $A\cdot A\geq 0$. But this contradicts to  $A\cdot A=-1<0$.
Therefore $A$  has an embedded  $J$-holomorphic curve representative for each $J\in \mJ_{\mC}$ since, by Proposition \ref{existence of curves},
    $A$ is either represented by  an embedded $J$-holomorphic rational curve or
   represented by a $J-$holomorphic stable rational curve.

Now suppose $J\in \mJ_{\emptyset}=\mJ_{open}$ and we verify the four statements.

The first statement follows from the definition (\ref{fine decomposition}) of $\mJ_{open}$ and Condition \ref{negsphemb}.

The second statement follows from Proposition \ref{existence of curves} and the first statement. Notice that $\mS_\w^{-1}(\emptyset)=\mS_\w^{-1}$.

The third statement that $\mJ_{open}\subset \mJ^{\mfD}$ for any $\mfD \subset \mS_{\w}^{-1}$ follows directly from the second statement.

For the last statement, it suffices to show that $\mJ^{\mfD}\subset \mJ_{open}$ by the third statement. If $J\in \mJ^{\mfD}$, then there is an embedded $J$-holomorphic rational curve
in each class in $\mfD$. The desired conclusion follows from the positivity of intersection.
\end{proof}

We will further study when $\mJ_{open}= \mJ^{\mfD}$   in  Section \ref{Dopen}.

\begin{rmk}
Two related subsets $\mJ_{top}$ and $\mJ_{good}$ were introduced in \cite{LZ15}.
A tamed $J$ on a rational manifold is called good (\cite{LZ15}, page 4; \cite{FM88} when $J$ is integrable) if (i)
there is a smooth genus one subvariety in the anti-canonical class $-K_J$, and
(ii) any irreducible genus zero subvariety of negative self-intersection is a $(-1)$-curve.
$J\in \mJ_{top}$ (\cite{LZ15}, Def 2.13) if  any  simple $J$-holomorphic curve with negative self-intersection is an embedded rational curve with self-intersection $-1$.
 Clearly, $\mJ_{top}\subset \mJ_{open}$ since the definition of $\mJ_{open}$ only exclude simple $J$-holomorphic {\bf rational} curve with  self-intersection at most $-2$.
 However, when $\chi(X)\leq 11$, $\mJ_{top} = \mJ_{open}$ by Proposition 4.2 in \cite{Zha17}.

 The claim in Lemma \ref {open and -1} that  if $A\in S_{\w}^{-1}(\mC)$  is represented by  an embedded $J$-holomorphic rational curve
 could be compared with \cite{MO15} Theorem 1.2.7 (iii).  Our proof is much easier since we assume Condition 1 holds.
\end{rmk}


\subsubsection {Condition \ref{negsphemb} and prime submanifolds}
  Recall   that $\mJ_{\w}$  is  metrizable (cf. \cite{FS90}).  Hence the prime subsets are also metrizable.
Here is the main result of this subsection.

  \begin{prp}\label{submfld}
  Assume   Condition \ref{negsphemb} holds.
  For an   admissible set $\mC$ with nonempty $\mJ_{\mC}$,
the prime set $\mJ_{\mC}$  is a paracompact Hausdorff submanifold with  $\cod(\mJ_{\mC})=\cod_{\mC} =\sum_{C_i\in \mC } \cod_{C_i}$.

In particular,   for each $A\in \mS_{\w}^{-k}$, $\mJ_{A}$   is  a paracompact Hausdorff submanifold with codimension $2k-2$.
 \end{prp}

To prove this result we prepare a couple of lemmas.

 \begin{lma}\label{l:dec}
Assume Condition \ref{negsphemb}. Suppose $\mJ_{\mC}$ and $\mJ_{\mC'}$ are two distinct prime subsets with
$J_0\in \overline{\mJ_{\mC}} \cap \mJ_{\mC'} \ne \emptyset$.
Then
there is a non-empty subset $\mC_{deg}=\{A^i\} \subset \mC$
such that $\mC \setminus \mC_{deg} \subset \mC'$, and
 each class $A^i$ is represented by a $J_0-holomorphic $ stable rational curve.

\end{lma}

 \begin{proof}
Since $J_0\in \overline{\mJ_{\mC}} \cap \mJ_{\mC'} \ne \emptyset$,  then   there is a convergent sequence  $\{J_n\} \subset \mJ_{\mC} $
such that $\{J_n\}\rightarrow J_0\in \mJ_{\mC'}.$
For $J_0$,  take all the elements in  $\mC$ that are not irreducibly   $J_0$-holomorphic, and
denote the subset by  $\mC_{deg}=\{A^i\}$.   $\mC_{deg}$ is non-empty since $\mC\ne \mC'.$ Note that any class in $\mC \setminus \mC_{deg}$ has an irreducible $J_0$-holomorphic curve  representative, and hence  $\mC \setminus \mC_{deg}\subset \mC'$.
By Theorem \ref{Gcpt}, for each  $A^i\in \mC_{deg}$, there is a $J_0$-holomorphic stable rational curve in the class $A$.


 \end{proof}

 \begin{lma}\label{tau}
 Assume Condition \ref{negsphemb}.
If $\mC' \subset\mC$ but $\mC'\ne \mC$, then $\overline \mJ_{\mC} \cap \mJ_{\mC'}= \emptyset.$
 \end{lma}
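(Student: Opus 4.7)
The plan is to argue by contradiction using Gromov compactness together with the admissibility constraint built into $\mC'$. Suppose $\overline{\mJ_{\mC'}}\cap\mJ_{\mC}\ne\emptyset$ with $\mC\subsetneqq\mC'$. I pick $J_0$ in the intersection, a sequence $J_n\to J_0$ with $J_n\in\mJ_{\mC'}$, and a class $A\in\mC'\setminus\mC$. Since every exceptional ($A\cdot A=-1$) class admits an embedded pseudoholomorphic representative for any tame $J$, such an $A$ must satisfy $A\cdot A\le -2$. Applying Gromov compactness (Theorem \ref{Gcpt}) to the embedded $J_n$-spheres $C_n^{A}$ in class $A$, a subsequence converges to a $J_0$-cusp curve, giving a homology decomposition $A=\sum_i m_i[C_i]$ with $m_i\geq 1$, where each $C_i$ is a simple $J_0$-holomorphic sphere and the $C_i$ have distinct images.

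The decisive step is to locate a component $C_{i_0}$ of the cusp that lies in $\mC$ and satisfies $A\cdot[C_{i_0}]<0$, thereby violating admissibility of $\mC'$. From $A\cdot A=\sum_i m_i(A\cdot[C_i])$ and $A\cdot A\leq -2<0$, one of the summands must be negative, so there exists an index $i_0$ with $A\cdot[C_{i_0}]<0$. I claim that $[C_{i_0}]\cdot[C_{i_0}]<0$: otherwise, writing $A\cdot[C_{i_0}]=m_{i_0}[C_{i_0}]^{2}+\sum_{i\ne i_0}m_i\bigl([C_i]\cdot[C_{i_0}]\bigr)$, the first summand is non-negative by the hypothesis $[C_{i_0}]^{2}\geq 0$, and each cross term is non-negative by positivity of intersection (Theorem \ref{positive}) applied to the distinct simple $J_0$-curves $C_i, C_{i_0}$; this would force $A\cdot[C_{i_0}]\geq 0$, a contradiction.

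Once $[C_{i_0}]^{2}<0$ is established, Condition \ref{rat} forces $C_{i_0}$ to be embedded, and since $J_0\in\mJ_{\mC}$ the characterization of $\mJ_{\mC}$ puts $[C_{i_0}]\in\mC$. If $[C_{i_0}]=A$, then $A$ itself would have an embedded $J_0$-holomorphic representative, placing $A\in\mC$ and contradicting the choice of $A$. Otherwise $A$ and $[C_{i_0}]$ are distinct elements of $\mC\subset\mC'$, so admissibility of $\mC'$ demands $A\cdot[C_{i_0}]\geq 0$, contradicting what we extracted from the cusp. This closes the contradiction.

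The main subtlety I expect to navigate is the dichotomy in the claim that the negative-intersection component $C_{i_0}$ has negative self-intersection; positivity of intersection for simple $J$-holomorphic curves is what prevents cancellation in the non-negative case and is the engine that converts the cusp decomposition into a violation of the admissibility of $\mC'$. Everything else is the standard bookkeeping of Gromov limits together with Condition \ref{rat}.
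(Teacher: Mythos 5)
Your proof is correct and takes essentially the same route as the paper's: a Gromov-limit decomposition of a class $A\in\mC'\setminus\mC$, positivity of intersections between distinct simple $J_0$-curves, Condition \ref{rat} to place the negative-square component in $\mC\subset\mC'$, and the pairwise non-negativity of classes in $\mC'$ --- you merely run the inequality chase in reverse order (extracting one component with $A\cdot[C_{i_0}]<0$, rather than showing every component pairs non-negatively with $A$ and then contradicting $A\cdot A<0$). One small caveat: your opening assertion that every $-1$ class has an embedded representative for \emph{every} tame $J$ is false (this is exactly why $-1$ classes can be absent from a label set), but it is also unneeded, since $A\in\mC'\subset\mathcal S_{\omega}^{<0}$ already gives $A\cdot A<0$, which is all your argument uses.
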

 \begin{proof}
  We argue by contradiction.  Suppose there exists some $J' \in \overline \mJ_{\mC} \cap \mJ_{\mC'}.$
 It follows from  Lemma \ref{l:dec} and \eqref{cusp curve} that,
  for some   $A\in  \mC\setminus \mC' $,  there is a cusp decomposition
   of the form:
$$   A=\sum_{\alpha}  r_{\alpha}[C_{\alpha}] + \sum_{\beta}p_{\beta} [C_{\beta}]+
  \sum_{\gamma}q_{\gamma} [C_{\gamma}],
$$
   where $r_{\alpha}, p_{\beta}, q_{\gamma}$ are positive integers, $C_{\alpha}, C_{\beta}$ are simple $J'$-holomorphic rational curves with
 $[C_{\alpha} ]^2\leq-2, [C_{\beta}]^2=-1,  [C_{\gamma}]^2\geq 0$. Moreover,

\begin{itemize}
\item $C_\alpha$ is embedded and hence $[C_{\alpha}]\in \mC$.

\item $C_{\beta}$ is embedded and hence $[C_\beta]\in \mS_\w^{-1}$.

\item $A\cdot [C_{\gamma}]\geq 0$.

\item   $\w(A)  >  \w([C_\alpha]),  \w([C_\beta]), \w( [C_\gamma])$ for any $\alpha, \beta, \gamma$.
\end{itemize}

 Since $J'\in  \mJ_{\mC'}$ we have $[C_{\alpha} ]\in  \mC'$, and hence $[C_{\alpha} ]\in  \mC$ by our assumption $\mC' \subset\mC$. Since $A\in   \mC$ and $[C_{\alpha} ]\in \mC$, we have   $A \cdot [C_{\alpha} ] \geq 0$ for each $\alpha$.  Notice that $A\cdot [C_{\gamma}]\geq 0$ for each $\gamma$.

 Since $A\cdot A<0$, there must exist $\beta_0$ such that $C_{\beta_0}\cdot A<0$.  Notice that $\omega(A)> \omega(C_{\beta_0})$.
 Fix $J\in \mC$.
 Since  $[C_{\beta_0}]\in \mS_\w^{-1}$, it must have $J-$holomorphic curve or stable curve representative. Since $A\in \mC$, an embedded representative of $[C_{\beta_0}]$  implies
 $A\cdot [C_{\beta_0}]\geq 0$.

 Thus we are left with the case of a $J-$holomorphic stable rational curve in the class $[C_{\beta_0}]$.
 However, since  $A\in \mC$ and $A\cdot [C_{\beta_0}]< 0$,   in such a $J-$holomorphic stable rational curve of $[C_{\beta_0}]$ there has to be an irreducible component in the class $A$
 because otherwise we would have $A\cdot [C_{\beta_0}]\geq 0$. But this implies that $\omega([C_{\beta_0}]>\w(A)$, contradicting to the inequality $\omega(A)> \omega(C_{\beta_0})$.


 \end{proof}

Finally, we verify the prime subsets are submanifolds.

  \begin{proof}
  [Proof of Proposition \ref{submfld}]
 First, note that $\mJ_{\mC}$ is a subset of
   $U_{\mC}$, which is a submanifold of $\mJ_{\omega}$ with
   codimension $\sum_{i\in I } cod_{C_i}$ by Proposition \ref{stratum}.
 Then we examine  $U_{\mC} \setminus \mJ_{\mC}$.
  $U_{\mC}$ is a
  disjoint union of $\mJ_{\mC'}$  where each  $\mC'$ is admissible and
  contains $\mC$ as a proper subset.
  And the union of
   these $\mJ_{\mC' }$ is relatively closed in $U_{\mC}$ by
   Lemma \ref{tau}.
   Hence $\mJ_{\mC}$ is itself a submanifold of codimension
  $ \sum_{i\in I } cod_{C_i}$.  The paracompactenss and Hausdorff property come from the
  metrizable property (cf. \cite{FS90}).
  \end{proof}


To further analyze the decomposition, especially  $\mX_2$ and $ \mX_4$,  we will restrict to the case of a reduced symplectic form.

\subsection{The   cone \texorpdfstring{ $P(X)$,}{Lg} the  root systems \texorpdfstring{ $R(X)$}{Lg}  and \texorpdfstring{ $\Gamma_L(X, \w)$}{Lg} }

\subsubsection{Reduced symplectic forms}
We recall the notion of reduced symplectic forms.

\begin{dfn}\label{reduced}
Let $X$ be $ {\CC P^2}\# k\overline{\CC P^2}$ with a standard basis $\{H, E_1, E_2, \cdots, E_{k}\}$  of $H_2(X;\ZZ)$.
A  class   $\nu H-\sum c_i E_i$  is called {\bf reduced} (with respect to the basis) if
$$
c_1\geq c_2 \geq \cdots \geq c_k>0
\quad \text{and} \quad \nu\geq c_1+c_2+c_3.
$$

Reduced cohomology classes are defined as the Poincar\'e dual of reduced homology classes. A symplectic form $\omega$ on $X$ is called reduced if $[\omega]$ is reduced.
A reduced symplectic class is the class of a reduced symplectic form.
\end{dfn}

To us, the importance of the notion is the following result [\cite{GaoHZ}, \cite{LL01}, \cite{KK17}, and its Math Review]:
\begin{thm} \label{redtran}
For a rational surface $X= {\CC P^2}\# k\overline{\CC P^2}$,  every class with positive square in $H^2(X;\RR)$ is equivalent to a unique reduced class under the action of $\rm{Diff}^+(X)$.
If a symplectic form $\omega$ on $X$
is reduced, then its canonical class is  $$ K_{\w}=-3H +\sum^{k}_{i=1} E_i.$$
When  $3\leq k\leq 8$, any reduced class is represented by a symplectic form.
When $k\leq 2$, any reduced class with $\nu>c_1+c_2$ is represented by a symplectic form.



  \end{thm}

For the history of the first statement, see the MathSciNet Review of \cite{KK17}, where a proof in this generality is also given following \cite{GaoHZ}.  The rest of the theorem follows from Theorem 3 and  Proposition 3.6 in \cite{LL01}.

\begin{lma}\label{minemb}

For any reduced symplectic form $\w$ on  ${\CC P^2}\# k\overline{\CC P^2}, k\geq 3$,   $E_k$
has the smallest $\w$-area in the set $\mathcal S_{\w}^{-1}$.
\end{lma}

\begin{proof}
 Any class in $\mathcal S_{\w}^{-1}$ is of the form $E_i, 1\leq i \leq k$, or  $A=dH-\sum_{i=1}^k a_i E_i$ is in $\mathcal S_{\w}^{-1}$ for $d>0, a_i\geq 0$ (see eg \cite{LW12}).
 By the reduced assumption,  $\omega (E_1) \geq \cdots \geq \omega (E_i)  \geq \cdots \geq \omega (E_k)$ and $\w(H)\geq  \omega (E_i)+\omega (E_j) +\omega (E_k),  \forall$ distinct $i,j,k.$

Suppose $A=dH-\sum_{i=1}^k a_i E_i$ is in $\mathcal S_{\w}^{-1}$.   By the adjunction formula, $K_{\omega}\cdot A=-1$ and hence $3d=1+\sum_{i=1}^n a_i $.    Also by positive paring with the classes $H-E_1$ and $H-E_2$, $a_1\leq d $ and $a_2\leq d $. In particular,  $a_1+a_2\leq 2d$.
Therefore it is easy to see that we can write $A$ as a sum
$A= U_1+\cdots+U_{d-1}+V$, where each $U_a$ is of the form $H-E_p-E_q-E_r$
with no repeated $E_1$ or $E_2$,  and  $V$ is of the form  $H-E_i-E_j$.
Observe that   $\omega(U_a)\geq0$    and $\omega(V)\geq \omega(E_k)$ by the reduced condition. Thus the lemma would follow from such a decomposition of $A$.

\end{proof}

\subsubsection{$P(X_k)$ as a polyhedral cone for $3\leq k \leq 8$}\label{3to8}

\begin{dfn}
Let $X_k= {\CC P^2}\# k\overline{\CC P^2}$.  Its normalized reduced symplectic cone $P_k=P(X_k)$  is defined as the space of reduced symplectic classes having area 1 on $H$.  We represent such a class
 by $(1|c_1, \cdots, c_k)$,     or $(c_1, \cdots, c_k)\in \RR^k$.
 \end{dfn}

 When $ k\leq8,$
 there is a distinguished  class in $P_k$,  $M_k= (1|\frac13, \cdots, \frac13)=(\frac13, \cdots, \frac13),$  which is called the (normalized) monotone class.
 When $3\leq k\leq8,$ we will show that $P_k$ is a  polyhedral cone
 that are described by $M_k$ and
the following  $k$ classes of square $-2$:

\begin{equation}
  l_1 = H-E_1-E_2-E_3,  \quad l_2 = E_1-E_2,\quad  \cdots \quad, \quad l_k =  E_{k-1}-E_k.  \label{simroot}
\end{equation}

\begin{prp}\label{nrsc}
For $X_k=\CC P^2 \# k{\overline {\CC P^2}}, 3\leq k\leq8,$  the  normalized reduced symplectic cone $P_k$  is a polyhedral cone
in $\mathbb R^k$ with the  vertex at $M_k$ and the convex base
in the $c_1c_2\cdots c_{k-1}$ (i.e. $c_k=0$) hyperplane  generated by the following  $k$ points $G_i$:
$$G_1=(0,\cdots,0),
 G_2=(1, 0,\cdots, 0),
G_3=(\frac{1}{2}, \frac{1}{2}, 0, \cdots,0),$$
$$G_4=( \frac{1}{3}, \frac{1}{3}, \frac{1}{3}, 0,\cdots,0),
  \cdots ,
G_k=(\frac{1}{3},\cdots, \frac{1}{3}, 0).
$$
The symplectic classes on each edge $M_kG_i$ are characterized by the property of pairing trivially with  $l_j$ for any $j\ne i$ and positively on $l_i$.

 Consequently, the reduced symplectic classes are characterized as the symplectic classes which are positive on each $E_i$ and non-negative on each $l_i$.
\end{prp}

\begin{proof}
 When $3\leq k\leq8,$  any reduced class of $X_k$ is a reduced symplectic class.
And a  normalized reduced class satisfies
 \begin{equation}\label{normalized reducedHE}
 1\geq c_1+c_2+c_3, \quad  c_1\geq c_2, \quad  c_2\geq c_3, \quad   \cdots, \quad  c_{k-1}\geq c_k, \quad   c_k>0.
\end{equation}

  Let $\Psi$ be the  translation moving $M_k=(\frac13, \cdots, \frac13)$ to $0.$ Under this linear translation,  $(1|c_1, \cdots, c_k)$ is moved to  $x=(x_1, \cdots, x_k)=(c_1-\frac13, \cdots, c_k-\frac13)$, and the normalized reduced condition (\ref{normalized reducedHE}) can be written as the  $k$ homogeneous conditions:
  $$  0\geq x_1+x_2+x_3, \quad  x_1-x_2\geq 0, \quad x_2-x_3\geq 0,  \quad \cdots, \quad    x_{k-1}- x_k\geq 0, \quad x_k>-\frac{1}{3}.$$
Clearly, $\Psi(P_k)$ has only one vertex at the origin and its opposite face is open and at the hyperplane $x_k=-\frac{1}{3}$.
There are $k$ inequalities of the form $\geq$ in (\ref{normalized reducedHE}). Setting $c_k=0$ and all  of the $k$ inequality $\geq $ to be equality except the $i$-th one, we obtain the $k$ points $G_i$  in the
$c_1\cdots c_{k-1}$ hyperplane.
The rays $M_kG_i$ are clearly extremal rays. Notice that $M_k$ pairs trivially with  each $l_j$,  and $G_i$ pairs trivially with each $l_j$ for each $j\ne i$. It follows that $M_kG_i$ pairs trivially with each $l_j$ except for $j=i$.
\end{proof}

We remark that the  $K-$symplectic  cone in \cite{LL01}  for $K=-3H+\sum_{i=1}^k E_i$ has an explicit geometric description
in Proposition 3.2  \cite{Zha17}.   This  $K-$symplectic  cone  is acted on by  the Cremona group, where the Cremona group is the subgroup of $Aut^+(H^2(X, \mathbb Z))$ preserving $K$.  The cone of reduced symplectic classes is just the fundamental domain  of  the $K-$symplectic cone under this action, and
the normalized reduced cone $P_k$  in Proposition \ref{nrsc} is the   slice in the fundamental domain with the $H$ coefficient being $1$.

\begin{dfn}  A $p-$dimensional {\bf open face} of $P_k$ is defined as the {\bf interior} of the convex hull of $M_k$ together with $p\leq k$ points in the set $\{G_i\}.$  $P_k$ has
$2^k$ open faces in total:
a unique zero dimensional open face $M_k$; $k$ one dimensional open faces,
and generally, $\binom{k}{p}$ open faces of dimension $p$.

Our convention is to denote an open face with  vertices $v_1, v_2, \cdots, v_l$ simply by $v_1v_2\cdots v_l$.
\end{dfn}

The top dimensional open face is the space of reduced classes given by the $k$  strict inequalities:   $\lambda:=c_1+c_2+c_3<1; c_1>\cdots >c_k.$
An open face of  codimension $l$ is when
 $l$  of those $``>"$ is turned  into $``="$.

Also note that, when projeced  onto the hyperplane $c_k =0,$ $M_k$ is sent to $M_{k-1}$ and $P_k$ is sent to $P_{k-1}$.

 In picture \ref{3c}, the tetrahedron $MOAB$ describes $P_3$, where $M=M_3, O=G_1, A=G_2, B=G_3$. There are 8 open faces:
 $$M, MO, MA, MB, MOA, MOB, MAB, MOAB.$$
 Notice that the triangle $OAB$ is completely outside $P_3$.
 We also remark that the change from $G_i$ to $MOAB$ notation will be used frequently.

\begin{figure}[ht]
\begin{center}
\begin{tikzpicture}
[scale=4,
axis/.style={->,blue,thick},
vector/.style={-stealth,red,very thick},
]
\coordinate (O) at (0,0,0);
(-0.1,0,0)node{$O$};
\draw[axis] (0,0,0) -- (1.2,0,0) node[anchor=north east]{$c_2$};
\draw[axis] (0,0,0) -- (0,1.3,0) node[anchor=north west]{$c_3$};
\draw[axis] (0,0,0) -- (0,0,1.4) node[anchor=south]{$c_1$};
\draw[vector]  (0.38,0.3,0.38)--(0,0,0);
\draw[dashed,red](0,0,0) -- (0.5,0,0.5);
\node[draw] at (0.6,0,0.6){$B:(\frac12, \frac12, 0)$};
\draw[dashed,white] (0,0,0)--(0,0,0.8);
\node[draw] at (0,-0.1,0.7){$A:(1,0,0)$};
\draw[vector]  (0.38,0.3,0.38)--(0,0,0.8) ;
\draw[dashed,red] (0.5,0,0.5) -- (0,0,0.8);
\draw[vector] (0.38,0.3,0.38)--(0.5,0,0.5);
\node[draw] at (0.4,0.3,0){$M=M_3:(\frac13,\frac13,\frac13)$};
\draw[fill=red] (0.38, 0.3, 0.38) circle (.03);
\end{tikzpicture}
\caption{Normalized Reduced symplectic cone of $\CC P^2 \# 3\overline{\CC P^2 }$}\label{3c}
\end{center}
\end{figure}

\subsubsection{The root system $R_k$ and edges of $P_k$}

Recall that a  root system $\Phi$ in an inner product  vector space $(E, <, >) $  is a finite spanning set of non-zero vectors, called roots, that satisfies the following 2 conditions:

a). If  $\alpha\in \Phi$ then $n\alpha \in \Phi$ if and only if $n=\pm 1$.

b). For any two roots $\alpha, \beta \in \Phi$,  the number $2\frac{< \alpha, \beta>}{<\alpha, \alpha>}$ is an integer and $\sigma_{\alpha}(\beta)=\beta-2\frac{< \alpha, \beta>}{<\alpha, \alpha>}\alpha \in \Phi$.

Recall that, given a root system $\Phi$  in $(E, <, >)$,  a set $\triangle$ of simple roots is a subset of $\Phi$ which forms a basis of $E$ and has the additional property that
every root in $\Phi$ is an integral linear combination of elements of $\triangle$ with the coefficients either all non-negative or all non-positive.
 Given a set $\triangle$ of simple roots, the set of  roots which are non-negative (non-positive)  integral linear combinations of elements of $\triangle$ is called the set of positive (negative)  roots, and it is denoted by $\Phi^+$ ($\Phi^-$). Clearly, $\Phi=\Phi^+\sqcup \Phi^-$ and $|\Phi^+|=|\Phi^-|=\frac{1}{2}|\Phi|$.

We  slightly reformulate a beautiful fact in \cite{Man86} (Theorem 23.9 in pages 115-116,  see also page 1 in \cite{LZ14}).  For    $X_k$  with $3 \leq k\leq 8$, define the set
\begin{equation}\label{smooth root system}
R(X_k)=R_k :=  \{ A \in H_2(X_k,\ZZ)   \mid \left<A,K_k\right> = 0, \quad \left<A,A\right> = -2 \},
\end{equation}
where $K_k=-(3H-E_1-\cdots-E_k)$.
It is straightforward to check that the root system $R_k$ satisfies both conditions a) and b) and hence forms a root system.
$R_k$ as a root system is described  in the  table below,

\[ \begin{array}{c|cccccc}
k & 3 & 4 & 5 & 6 & 7 & 8 \\
\hline
R(X_k)&\aA_1\times \aA_2 &\aA_4 & \DD_5 & \EE_6 & \EE_7 & \EE_8\\
|R(X_k)| & 8 & 20 & 40 & 72 & 126 & 240 \\
\end{array} \]

Moreover,
\begin{itemize}
\item The classes $l_i$  in \eqref{simroot} are in the root system $R_k$ and provide a  canonical choice of  simple roots of $R_k$, which form the vertices of the Dynkin diagram.

\item
As remarked after Proposition \ref{nrsc}, the reduced cone is the fundamental domain of some Weyl group action, and  positive roots pairs non-negative with any vector in the Weyl chamber, which means  they pair non-negatively with an arbitrary reduced form.
\item By  Proposition \ref{nrsc}   these simple roots $l_i$  correspond to the edges $M_kG_i$ of $P_k$ in the sense that  the symplectic classes on each edge $M_kG_i$ are characterized by the property of pairing trivially with  $l_j$ for any $j\ne i$ and positively on $l_i$.
When $k=3$, with the notation $MOAB$, $l_1$ corresponds to $MO$, $l_2$ corresponds to $MA$ and $l_3$ corresponds to $MB$.  We denote this correspondence $l_i \leftrightarrow MG_i.$

\end{itemize}

\subsubsection{Lagrangian root systems for $3\leq k\leq 8$} \label{Lagroot}

The class of a  Lagrangian sphere in a symplectic 4-manifold $(X, \omega)$ has square $-2$ and pairs trivially with $[\omega]$ and $K_{\omega}$.
A class is called a Lagrangian sphere class if it is represented by an embedded Lagrangian sphere. Let $\Gamma_L(X, \omega)$ be the set of Lagrangian sphere classes.
The following observation should be known to experts.

\begin{lma} \label{roots}
Suppose $(X, \w)$ is a symplectic 4-manifold with a finite $\Gamma_L(X, \omega)$. Then   $\Gamma_L(X, \omega)$  is  a  root system in the span of the root system $\Gamma_L(X, \omega)$.
\end{lma}

\begin{proof}

Condition a) is clearly satisfied for $\Gamma_L(X, \omega)$ since any class $\alpha \in \Gamma_L(X, \omega)$ has $\alpha\cdot \alpha =-2$ and $n\alpha\cdot n\alpha =-2 n^2=-2$ only if $n=\pm 1$.
As for condition b), it is satisfied by the construction of Seidel's Lagrangian Dehn twist $S_L$ for a Lagrangian sphere $L$. $S_L$ is a symplectomorphism acting on $H_2(X; \RR)$ as a reflection through the hyperplane perpendicular to $[L]$. In particular, if $\alpha$ is represented by a Lagrangian sphere $L$  and $\beta$ is represented by a Lagrangian sphere $L'$, then
$S_L(L')$ is a Lagrangian sphere in the class $\sigma_{\alpha}(\beta)$.
\end{proof}

\begin{prp} \label{MLS}

Suppose $\omega_{mon}$ is a monotone symplectic form on $X_k$ with $3\leq k\leq 8$, then $\Gamma_L(X_k, \omega_{mon})=R_k$ as root systems.

For a reduced symplectic form $\omega$ on $X_k$,
  $\Gamma_L(X_k, \omega)$ is a sub-root system of  $\Gamma_L(X_k, \omega_{mon})$, and  a canonical choice of simple roots  consists of those $l_i\in \Gamma_L(X_k, \omega_{mon})$ which pair trivially with $[\omega]$.  $[\omega]$ pairs non-negatively with the positive roots of  the root system $\Gamma_L(X_k, \omega_{mon})$.

\end{prp}

\begin{proof}

  Clearly, the root system $R_k$ contains all the Lagrangian sphere classes  of $(X_k, \omega_{mon})$ since $K_{\w_{mon}}=K_k$.
On the other hand, by \cite{LW12},   a class $A$ of a rational surface with $A\cdot A=-2$ is a Lagrangian sphere class if $A$ is represented by a smoothly embedded sphere and $A$  pairs trivially with $[\omega]$ and $K_{\omega}$.
 Since $\chi(X_k) \leq 12,$ by \cite{LL06}, every square $(-2)$-class of $X_k$ can be represented by a smoothly embedded sphere. Since $[\omega_{mon}]$ is proportional to $K_{\w_{mon}}$, any class in the root system $R_k$ satisfies the criterion in \cite{LW12}.

Now suppose that $\w$ is a reduced symplectic form.
Since $K_{\w}=K_{\w_{mon}}$ for a reduced symplectic form $\w$,  $\Gamma_L(X_k,\omega)$ is contained in  $\Gamma_L(X_k, \omega_{mon})$, and hence by Lemma \ref{roots}, $\Gamma_L(X_k, \omega)$ is a sub-root system of $\Gamma_L(X_k, \omega_{mon})$.
Explicitly, suppose $[\w]$ lies in  a $p-$dimensional open face, which we denote by $F(\omega)$. If  $M_k G_{i_1},$ $ M_k G_{i_2},\cdots M_k G_{i_p}$  are the  edges of $F(\omega)$, then $[\omega]$ is positive on the corresponding simple roots $l_{i_j}$ of
$\Gamma_L(X_k,\w_{mon})$
and vanish on the remaining simple roots. Therefore $[\omega]$ vanishes exactly on the   positive roots of  $\Gamma_L(X_k,\w_{mon})$ generated from these simple roots $l_{i_j}$
and is positive on the remaining positive roots of  $\Gamma_L(X_k,\w_{mon})$. In other words,
$\Gamma_L(X_k,\omega)$  contain exactly  the   positive roots of  $\Gamma_L(X_k,\w_{mon})$ generated from these simple roots $l_{i_j}$.
\end{proof}

Let  $N_{\w}$ be the number of $\omega$-symplectic $(-2)$-sphere classes. Note that $N_{\w}$ and $\Gamma_L(X_k,\w)$ are both invariant in any given open face.  Let   $N_{\w,L}$ be the number of  $\omega-$Lagrangian sphere classes up to a change of the sign.
    Following from the above discussion and denoting the set of positive roots  of $\Gamma_L(X_k,\w_{mon})$ by $R^+(X_k)$, we have
\begin{equation}\label{SL}
    N_{\w}+N_{\w,L}= |R^+(X_k)|=\frac{1}{2} |R_k|.
\end{equation}

We take the 3-point blow up as an example to illustrate the above correspondence between the simple roots and the edges of the cone. In this case, the simple roots are $  l_1 = H-E_1-E_2-E_3,  \quad l_2 = E_1-E_2,  l_3= E_2-E_3; $ and
$$R^+(X_3)=\{ H-E_1-E_2-E_3,  E_1-E_2, E_2-E_3, E_1-E_3\}, \quad N_{\w}+N_{\w,L}=4.$$

We describe the  Lagrangian root system on each open face of $P_3$, which is the tetrahedron $MOAB$ in  Figure \ref{3c}.

\begin{itemize}
\item  The monotone vertex  $M$.  $N_{\w}=0$ in this case.
 And 3 Lagrangian simple roots $MO\leftrightarrow l_1= H-E_1-E_2-E_3,$ $MA\leftrightarrow l_2= E_1-E_2,$ $MB\leftrightarrow l_3= E_2-E_3$ form $R(X_3)$ with the Dynkin diagram:

  \begin{center}

\begin{tikzpicture}

    \draw (1,0) -- (2,0);

    \draw[fill=red] (1,0) circle (.1);
    \draw[fill=red] (2,0) circle (.1);
    \draw[fill=red] (3,0) circle (.1);

    \node at (1 ,0.4) {$MA$};
  \node at (2,0.4) {$MB$};
   \node at (3,0.4) {$MO$};
    \node at (0,0) {$\aA_1\times \aA_2$};

\end{tikzpicture}

\end{center}

 \item The edge  $MO$.
 $\Gamma_L=\aA_2$,  obtained from $R(X_3)$ by removing $MO$:

  \begin{center}

\begin{tikzpicture}

    \draw (1,0) -- (2,0);

    \draw[fill=red] (1,0) circle (.1);
   \draw[fill=red] (2,0) circle (.1);
   \draw[dotted] (3,0) circle (.1);
   \node at (3,0.4) {$MO$};

   \node at (1 ,0.4) {$MA$};
   \node at (2,0.4) {$MB$};
   \node at (0,0) {$ \aA_2$};

\end{tikzpicture}

\end{center}

\item  The edge $MA$.  $\Gamma_L=\aA_1\times \aA_1$,  obtained from $R(X_3)$ by removing $MA$:
  \begin{center}

\begin{tikzpicture}

    \draw[dotted,thick]  (1,0) -- (2,0);

    \draw[dotted](1,0) circle (.1);
    \draw[fill=red] (2,0) circle (.1);
    \draw[fill=red] (3,0) circle (.1);

  \node at (1 ,0.4) {$MA$};
  \node at (2,0.4) {$MB$};

   \node at (3,0.4) {$MO$};
    \node at (0,0) {$\aA_1\times \aA_1$};

\end{tikzpicture}

\end{center}
\item The edge $ MB$.  $\Gamma_L=\aA_1\times \aA_1$, obtained from $R(X_3)$ by removing $MB$:

  \begin{center}

\begin{tikzpicture}

    \draw [dotted,thick] (1,0) -- (2,0);

    \draw[fill=red] (1,0) circle (.1);
    \draw[dotted] (2,0) circle (.1);
    \draw[fill=red] (3,0) circle (.1);

    \node at (1 ,0.4) {$MA$};
  \node at (2,0.4) {$MB$};

   \node at (3,0.4) {$MO$};
    \node at (0,0) {$\aA_1\times \aA_1$};

\end{tikzpicture}

\end{center}

\item Three  open faces of dimension 2:  $MOA,$ $MOB$ $MAB$.   $\Gamma_L=\aA_1$ lattice of node $MB$, $MA$ and $MO$ respectively(by removing all other vertices).

\item The top dimensional open face $MOAB$,  where all
$4$  spherical $(-2)$-class are  symplectic and the Lagrangian system  $\Gamma_L$ is $\emptyset$.
\end{itemize}

\subsubsection{$R(X)$, $P(X)$ and $\Gamma_L(X, \omega)$  when $\chi(X)\leq 5$}\label{chileq5} The main purpose of this subsection is to describe $R(X)$, $P(X)$ and $\Gamma_L(X, \omega)$ for rational surfaces with $\chi(X)\leq 5$ and
show that the equation \eqref{SL} continues to hold for these rational surfaces in  Lemma \ref{5sl}.
Such  rational surfaces  are $S^2 \times S^2$ and $X_k, k=0,1,2$.
We start with $X_k, k\leq 2$.

\medskip

\noindent $\bullet$ $X_2, X_1, X_0$.

For $k=0,1,2$, the root system $R(X_k)$ is similarly defined via \eqref{smooth root system} and $K_k=-3H+\sum_{i=1}^k E_i$,  and it is easy to see that
$$ R(X_0)=R(X_1)=\emptyset, \quad R(X_2)=\{\pm (E_1-E_2)\}=\aA_1,  $$
with $R^+(X_2)=\{E_1-E_2\}$.

For the normalized reduced symplectic cone, notice that $P_2$ can be obtained by  projecting $P_3$ (cf.  Figure \ref{3c}) onto the plane $c_3 =0$.

\begin{figure}[ht]
\begin{center}
\begin{tikzpicture}
[scale=4,
axis/.style={->,blue,thick},
vector/.style={-stealth,red,very thick},
]
\coordinate (O) at (0,0,0);(-0.1,0,0)node{$O$};
\draw[axis] (0,0,0) -- (1,0,0) node[anchor=north east]{$c_1$};
\draw[axis] (0,0,0) -- (0,1,0) node[anchor=north west]{$c_2$};
\draw[red, thick]  (0.4,0.4,0)--(0,0,0) node[anchor=north ]{$c_1=c_2$};
\draw[green,thick] (0.4,0.4,0)--(0.8,0,0);
\draw[white] (0,0,0)--(0.8,0,0);
\node[draw] at(0.8,-0.15,0){$A:(1,0)$};
\node[draw] at(0.4,0.5,0){$B:(\frac12,\frac12)$};
\node[draw] at(0,0.33,0){$M_2:(\frac13,\frac13)$};
\draw[fill=red] (0.27, 0.27, 0) circle[radius=.02];
\draw[fill=gray] (0.27, 0, 0) circle[radius=.02 ];
\draw[fill=green] (0.4, 0.4, 0) circle[radius=.02 ];
\draw[fill=white] (0.8, 0, 0) circle[radius=.02 ];
\node[draw] at(0.27,-0.15,0){$M_1:(\frac13,0)$};
\end{tikzpicture}\caption{Normalized reduced cone when $\chi(X)\leq 5$ }\label{2c}
\end{center}
\end{figure}

In fact, we can visualize $P_k, k=0, 1,2,$  via Figure \ref{2c}, which describes the closure of $P_2$. We  label the possible monotone points as $M_1, M_2$.
For $k=2$, the cone $P_2$  is   the  triangle $OAB$ with the closed edges $AB$ and $OA$ deleted, i.e. $\{ (c_1, c_2)|1>c_1+c_2> c_1\geq c_2>0\}$.
For $k=1$, the cone $P_1$  is  the projection of $P_2$ onto the $c_1$-line. Explicitly, $P_1$ is  the horizontal open interval $OA$, i.e.   $\{ c_1\in (0,1)\}$. For $k=0$, the cone $P_0$  is   the point $O$.

For $X_0$ or $X_1,$ there are no Lagrangian sphere classes and  the Lagrangian root system is  $\emptyset.$
For $X_2$,  the Lagrangian system is empty at the $2$-dimensional open face,  and $\aA_1$ at the  $1$-dimensional open face $OB$.

\medskip

\noindent $\bullet$ $S^2\times S^2$.

 Let $\{F_1, F_2\}$ be the natural basis of $H_2(S^2\times S^2)$, which is also a basis of $H^2(S^2\times S^2)$ via Poincar\'e duality.
Let  $K=-2F_1-2F_2$  and use it to define
the root system $R(S^2\times S^2)$ via \eqref{smooth root system}. Clearly, $R(S^2\times S^2)=\{\pm (F_1-F_2)\}$ is just $\aA_1$, with $R^+(S^2\times S^2)=\{F_1-F_2\}$.


 Up to the action of $\Diff^+(S^2\times S^2)$ and scaling, every symplectic class is equivalent to
a unique class of the form    $F_1+\mu F_2$  for some  $\mu\geq 1$.
 Hence, the coefficient $\mu$, or equivalently, the ratio,  identifies a fundamental domain with the half-open and half-closed interval $[1, \infty)$.
 We simply define the normalized reduced symplectic cone $P(S^2\times S^2)$ to be this $\mu$ interval.
 Via the ratio $c_1/c_2$, it actually appears in Figure \ref{2c} as the open interval $BA$ together with the endpoint $B$.
 The explanation is that blowing down an $H-E_1-E_2$ symplectic sphere in $(X_2, \w)$ gives rise to a symplectic $S^2\times S^2$, and this corresponds to
 projecting $P_2$ to the interval $[B,A)$.
 Notice that $B$, the image of $M_2$,  is the class of a monotone form on $S^2\times S^2$.

 As for the Lagrangian system,
there is  either zero or  one Lagrangian sphere class up to sign and  the Lagrangian root system is $\emptyset$ or   $\aA_1$ respectively. Precisely,   $\Gamma_L(S^2\times S^2, \w_{mon})=\aA_1$, and $\Gamma_L(S^2\times S^2, \w)=\emptyset$ for any non-monotone $\w$. In terms of  $P(S^2\times S^2)=[1, \infty)$, the Lagrangian system is
empty on the $1$-dimension open face $(1, \infty)$ and $\aA_1$ at
the $0$-dimensional open face $\{1\}$.
 In terms of  the $[B,A)$ interval representation of $P(S^2\times S^2)$ in Figure \ref{2c}, the Lagrangian system is
empty on open  $BA$ and $\aA_1$ at
 $B$.

Finally, notice that the relation \eqref{SL} still holds when $\chi(X)\leq 5$.   We state this fact as a lemma. 

\begin{lma}\label{5sl} For any symplectic rational surface $(X, \w)$ with  $\chi(X)\leq 11$, we have
\begin{equation}\label{SL'}
    N_{\w}+N_{\w,L}= |R^+(X)|=\frac{1}{2} |R(X)|.
\end{equation}
\end{lma}

This observation will be useful in proving Corollary \ref{4Q}.

\begin{rmk}\label{geq9}
 When $\chi(X)\geq 12$, the normalized reduced symplectic cone is no longer a polytope. It is cut by one more quadratic equation and a K-positive linear equation. Further, the Cremona transform group is infinite, and the set of Lagrangian sphere classes forms a (generalized) root system guided by affine Kac-Moody algebra (cf. \cite{GaoHZ}).
 \end{rmk}




\section{Level 2 stratification   of \texorpdfstring{$\mJ_{\omega}$ when $\chi\leq 8$} {Lg}}\label{s:ACS}

In this section, we prove Theorem \ref{level 2} and Corollary \ref{Jopen}  for symplectic rational 4-manifolds with Euler number no larger than 8.

 For the convenience of computation, throughout this section we identify $\CC P^2  \# k\overline {\CC P^2}, k\geq 2,$ with $S^2\times S^2 \# (k-1)\overline{\CC P^2}, k\geq 2, $ and
 use  two natural bases for
 $H_2$.
Let  $\{B, F, E'_1,\cdots, E'_{k-1}\}$  be the natural basis of $H_2(S^2\times S^2 \# (k-1)\overline{\CC P^2};\ZZ)$.
 Then the transition from the  $\{B, F, E_i'\}$ basis to the  $\{H, E_i\}$ basis  is  explicitly given by
 \begin{align}\label{BH}
 B=H-E_2,\nonumber \\F=H-E_1, \nonumber \\E'_1=H-E_1-E_2,\\E'_i=E_{i+1},\forall i\geq 2\nonumber,
 \end{align}
with the inverse transition given by:
  \begin{align}\label{HB}
 H=B+F-E'_1,\nonumber \\E_1=B-E'_1, \nonumber \\E_2=F-E'_1,\\E_j=E'_{j-1},\forall j>2\nonumber .
 \end{align}
Thus,  $\nu H-c_1E_1 - c_2E_2 -\cdots -c_kE_k$ corresponds to
\begin{equation}\label{ctoa}
(\nu-c_1)B+ (\nu-c_2)F-(\nu-c_1-c_2)E_1'-c_3E_2'-\cdots -c_kE_{k-1}'.
\end{equation}

Note that in terms of the  basis $\{B,F, E'_1,\cdots, E'_{k-1}\}$,
up to scaling, a reduced class is of the form $B+\mu F-\sum_{i=1}^{k-1} a_i E'_i$ with
\begin{equation}\label{reducedBF}
 \mu \geq 1 > a_1 \geq  a_2 \geq \cdots \geq a_{k-1}>0
\quad \text{and} \quad a_1+a_2\leq 1.
\end{equation}
If a symplectic form $\omega$ on $X$
 is reduced, then using the basis $\{B,F, E'_1,\cdots, E'_{k-1}\}$, its canonical class is $$ K_{\w}=-2B-2F +\sum^{k-1}_{i=1} E'_i. $$

\subsection{The \texorpdfstring{$\mB, \mF, \mE$}{Lg} classification of \texorpdfstring{$\mathcal S_{\w}^{<0}$}{Lg} for a reduced form}


Here is our setting in this subsection: Suppose  $X=S^2\times S^2 \# n\overline{\CC P^2}, n \leq4,$  and $\omega$   is a reduced symplectic form in the class $B+ \mu F -\sum_{i=1}^n  a_i E'_i$ satisfying  \eqref{reducedBF}. We first make the following elementary observation, which will be crucial in this section.

\begin{lma} \label{reduceform}Suppose  $X=S^2\times S^2 \# n\overline{\CC P^2}$  and $\omega$   is a reduced symplectic form in the class $B+  \mu F -\sum_{i=1}^n  a_i E'_i$.
Then
\begin{equation} \label{e: reduced} \sum_{k=1}^n(a_k)^2\leq1, \quad \hbox{ if } n\leq4.
\end{equation}
\end{lma}
\begin{proof}If we consider the extreme value of the function $\sum_{k=1}^n (a_k)^2$ under the constrain given
by the closure of the reduced condition \eqref{reducedBF}, $a_i \in[0,1], a_i+a_j\leq 1$,
then the extreme value can only appear at $(1,0,\cdots,0) \quad \text{or} \quad (\frac12,\frac12,\cdots,\frac12)$,
meaning that $\sum_{i=1}^n (a_i )^2\leq max(1,n/4)$. Given $n\leq4,$ we have $\sum_{k=1}^n(a_k)^2\leq1$.
\end{proof}

\subsubsection{Constraints on simple \texorpdfstring{ $J$-}{Lg}holomorphic  curves for a reduced form}
The following is a key lemma providing basic constraints for simple $J$-holomorphic curves.


 \begin{lma}\label{classes}
Under the above setting,
if   $A=pB+qF-\sum r_iE'_i \in H_2(X;\mathbb{Z})$
is represented by   a simple  $J$-holomorphic curve $C$ for some $\omega$-tamed $J$,
then $p \geq 0$.

And if $p=0$, then $q=0$ or $1$.  If $p=1$, then $r_i \in \{ 0,1 \}$. If $p>1$, then $q\geq 1$.
\end{lma}

\begin{proof}
We start by stating three  inequalities: the area inequality \eqref{Area}, the adjunction inequality \eqref{adj},  the $r_i$ integer inequality \eqref{r integer}.

The area of the curve $C$  is positive and hence
\begin{equation}\label{Area}
 \omega(A)=p\mu+q -\sum_{i=1}^n a_ir_i > 0.
\end{equation}

By Theorem \ref{adj ineq theorem}
 the virtual genus $g_v(C)$ of the simple $J$-holomorphic curve $C$ (not necessarily embedded), defined by $g_v(C)= ([C]\cdot [C] -c_1(X, J)([C]))/2+1$, is non-negative.
Since  $J$ is $\w$ tamed and $\omega$ is reduced, $-c_1(X, J)$   is  the canonical class $K_{\omega}=-2B-2F+E'_1+\cdots +E'_n$. Note also that $g_v(C)$ is defined in terms of the homology class $[C]=A$ so
we  switch notation from $g_v(C)$ to $g_{\w}(A)$. Now
  the adjunction
inequality for  the simple $J$-holomorphic curve $C$ is of the following form:
 \begin{equation} \label{adj}
 0\leq 2g_{\omega}(A)=A\cdot A+K_{\w}\cdot A+2= 2(p-1)(q-1)-\sum_{i=1}^n r_i(r_i-1).
 \end{equation}

The third inequality is an  estimate of  the sum   $ -\sum_{i=1}^n r_i(r_i-1)$.  Since each $r_i$ is an integer, it is easy to see that
\begin{equation} \label{r integer}
 -\sum_{i=1}^n r_i(r_i-1) \leq 0,
  \end{equation}
and  $ -\sum_{i=1}^n r_i(r_i-1)=0$ if and only if $r_i=0$ or $1$ for each $i$.

\medskip
Now let us   divide into five  cases:

(i) $p=1$, (ii) $p>1$,  (iii)   $p<0,  q\geq1$,  (iv) $p< 0, q\leq  0$,  (v)  $p=0$.

\noindent {\bf Case (i)}. $p=1.$ Then $ -\sum_{i=1}^n r_i(r_i-1)=  2g_{\w}(A)  \geq 0.$
It follows from  \eqref{r integer} that   $ r_i(r_i-1)$ has to be 0 and hence $r_i\in \{ 0,1 \}$.

\noindent {\bf Case (ii)}.  $p>1$ and $q\leq 0$. Then
 $  -\sum_{i=1}^n r_i(r_i-1) =     2g_{\w}(A)- 2(p-1)(q-1)  \geq  0 - 2(p-1)(q-1) > 0$.  This is impossible. Therefore  $q\geq 1$ if $p>1$.

\smallskip

\noindent {\bf Case  (iii)}.  $p<0$ and $q\geq1$.

We show that this case is impossible.
Because $p \leq -1$, the adjunction inequality  \eqref{adj} implies that
$$0\geq -2g_{\omega}(A) \geq 4(q-1) + \sum_{i=1}^n  r_i(r_i-1)\geq (q-1) + \sum_{i=1}^n  r_i(r_i-1).$$
Applying the area equation \eqref{Area}, we have
$$(q-1) + \sum_{i=1}^n  r_i(r_i-1) >  (\sum_{i=1}^n a_ir_i -\mu p -1)+ \sum_{i=1}^n r_i(r_i-1).$$
Since  $-\mu p-1 \geq0$,
 $$(\sum_{i=1}^n a_ir_i -\mu p -1)+ \sum_{i=1}^n r_i(r_i-1)   \geq (\sum_{i=1}^n a_ir_i)+ \sum_{i=1}^n r_i(r_i-1)
   = \sum_{i=1}^n r_i(r_i-1+ a_i).$$
For any integer $r_i$ we have $ r_i(r_i-1+ a_i) \geq 0$  due to the  reduced condition   $1-a_i\in(0,1)$.
Therefore we would have $-2g_{\omega}(A)>0$, which is a contradiction.

\noindent {\bf Case (iv)}.     $p< 0, q\leq  0$.

We show this case is also impossible.  This will follow from  the following estimate, under a   slightly general assumption:
 \begin{equation} \label{e: g, p, q}
0\leq 2g_{\omega}(A)\leq 1+ |p|+|q|-p^2-q^2, \quad\quad\quad\quad    \hbox{if }   p\leq 0, q\leq 0.
\end{equation}

{\bf Proof of the inequality  \eqref{e: g, p, q}.}
In order to estimate  $ -\sum_{i=1}^n r_i(r_i-1)$  we  rewrite  the sum
\begin{equation}\label {e: positive-negative}  \sum_{i=1}^n r_i=\sum_{k=1}^ur_k +\sum_{l=u+1}^n r_l,
\end{equation}
where
each $r_k$ is negative and  each $r_l$ is  non-negative.

Since $p\leq 0, q\leq 0$, the area inequality  \eqref{Area} takes the following form:
\begin{equation}  \label {e: a_i r_i} -\sum a_ir_i >(|p|+|q|).
\end{equation}
Note that  there exists at least one negative $r_i$ term, i.e. $u\geq 1$ in \eqref{e: positive-negative}.
 An important consequence is
 \begin{equation}\label{e:  crucial}
  \sum_{k=1}^u  a_k r_k \leq  \sum_{i=1}^n  a_ir_i<0, \quad  (\sum_{k=1}^u a_kr_k)^2\geq (\sum_{i=1}^na_ir_i)^2.
     \end{equation}
We first observe   that, by the Cauchy-Schwarz inequality and equations \eqref {e: a_i r_i}, \eqref{e: reduced}, we have
\begin{equation} \label{e: CS}  (\sum_{k=1}^ua_k r_k)^2 \leq  \sum_{k=1}^u(r_k)^2 \times \sum_{k=1}^u(a_k)^2 \leq \sum_{k=1}^u(r_k)^2.
\end{equation}
Then we do the estimate:
\begin{align}\label{e: key estimate}
\sum_{i=1}^n r_i(r_i-1) =\sum_{i=1}^n r_i^2-\sum_{i=1}^n r_i
= \sum_{k=1}^u r_k^2- \sum_{k=1}^u r_k+ (\sum_{l=u+1}^n  r_l^2-\sum_{l=u+1}^n  r_l) \nonumber\\
\geq \sum_{k=1}^u r_k^2-\sum_{k=1}^u r_k \hbox{ \quad (since $x^2-x\geq 0$ for any integer)}   \nonumber\\
\geq (\sum_{k=1}^u a_kr_k)^2 - \sum_{k=1}^u a_kr_k
\hbox{\quad (follows from the two inequalities:}  \\
\hbox{$ -\sum_{k=1}^u  r_k > - \sum_{k=1}^u a_kr_k$ and $\sum_{k=1}^u  r_k^2\geq (\sum_{k=1}^u  a_kr_k)^2)$} \nonumber  \\
\geq (\sum_{i=1}^na_ir_i)^2 -  \sum_{i=1}^n a_ir_i \hbox{ \quad (this crucial step follows from \eqref{e: crucial})}  \nonumber\\
>|p|+|q|+(|p|+|q|)^2.\quad \quad \quad \quad \quad \quad \quad \quad \nonumber
\end{align}
Because  $\sum_{i=1}^n r_i(r_i-1)$ is an integer, we actually  have
$$\sum_{i=1}^n r_i(r_i-1)\geq 1+|p|+|q|+(|p|+|q|)^2.$$
Now the inequality \eqref{e: g, p, q} follows from the inequalities \eqref{e: key estimate} and  \eqref{adj},
 $$0\leq 2g_{\w}(A) =2pq -2(p+q)+2 -\sum_{i=1}^n r_i(r_i-1) \leq |p|+|q|+1-(p^2+q^2).$$

With the  inequality \eqref{e: g, p, q} established, we note that  a direct consequence  is that it is impossible to have $p\leq -2, q\leq 0$,  or $p\leq 0, q\leq -2$:
If $|p|>1,$ $|p|+|q|+1-(p^2+q^2)$ is clearly negative since $q^2\geq |q|, p^2 >|p|+1$; it is the same if $|q|>1$.

So  the inequality \eqref{e: g, p, q} leaves   two cases to analyze:  $p=q=-1$, or $p=-1, q=0$.
To deal with these two cases, as in the proof of  the  inequality \eqref{e: g, p, q}, we assume that $r_k<0$ for $1\leq k\leq u$ and $r_l\geq 0$ for $u+1\leq l\leq n$.
Notice that $\sum_{k=1}^ur_k^2 -\sum_{k=1}^ur_k \leq \sum_{i=1}^n r_i(r_i-1)$ as shown in \eqref{e: key estimate}.

$\bullet$  $p=-1$ and $q=0$.

In this case,  we have  $2g_{\w}(A)= 4-\sum_{i=1}^n r_i(r_i-1)$ so $\sum_{k=1}^ur_k^2 -\sum_{k=1}^ur_k \leq \sum_{i=1}^n r_i(r_i-1)\leq 4$ by the adjunction inequality \eqref{adj}. By the area inequality \eqref{Area}, we have
$\sum_{k=1}^ur_k < p+q=-1$, and hence  $\sum_{k=1}^ur_k \leq-1$.
It is easy to see that  $\{r_k\}=\{-1\}$ or $\{-1,-1\}$. But these possibilities are excluded
by the reduced condition $a_i+a_j\leq 1\leq \mu$ for any pair $i, j$ and the area inequality \eqref{Area}.

$\bullet$   $p=q=-1$.

In this case, we have $2g_{\w}(A)= 8-\sum_{i=1}^n r_i(r_i-1)$ so $\sum_{k=1}^ur_k^2 -\sum_{k=1}^ur_k \leq 8$. By the area inequality \eqref{Area}, we have
$\sum_{k=1}^ur_k < p+q=-2$, and hence  $\sum_{k=1}^ur_k \leq-2$.
It is easy to see that  $\{r_k\}=\{-1,-1,-1\} ,\{-1,-1,-1,-1\}$ or $\{-1,-2\}$.
Again these possibilities are excluded by the reduced condition $a_i+a_j\leq 1\leq \mu$ for any pair $i, j$ and the area inequality \eqref{Area}.


\noindent {\bf Case (v)}. $p=0$.

In this case, by the adjunction inequality \eqref{adj} we have $ -2(q-1)-\sum_{i=1}^n r_i(r_i-1)\geq0$. Since $-\sum_{i=1}^n r_i(r_i-1)\leq 0$, we have   $q\leq 1$.
If $p=0, q\leq 0$, then we apply the inequality \eqref{e: g, p, q} to conclude that $1+|q|-q^2\geq 0$. Since $q\leq 0$, this leaves only the possibilities that $q=0$ or $q=-1$.
We exclude the case $q=-1$.
If $q=-1$, then  by the adjunction inequality \eqref{adj} again we have $ 4=-2(q-1)\geq \sum_{i=1}^n r_i(r_i-1)$. So $-1\leq r_i\leq 2$ for each $i$ and at most two $r_i$ are negative.
By the area inequality \eqref{Area}  we have $-\sum_{i=1}^na_ir_i>-q=1$. But this contradicts with  the reduced condition $a_i+a_j\leq 1\leq \mu$ for any pair $i, j$.
Hence we must have $q=0$ or $1$ if  $p=0$.

  In conclusion, only cases (i),(ii),(v) are possible. Moreover,  if $p=0$, then $q=0$ or $1$;  if $p=1$, then $r_i \in \{ 0,1 \}$; if $p>1$, then $q\geq 1$.

\end{proof}

For rational curves, we further have

\begin{lma}\label{sphere}
Suppose $A=pB+qF-\sum r_iE'_i $
has a simple $J$-holomorphic rational curve representative and $A\cdot A<0$. Then $p=0$ or $1$.
\end{lma}

\begin{proof}
Let us assume  $p\geq 2$ and draw a contradiction. First observe that
  $q\geq 1$ by Lemma \ref{classes}.

 Observe also that,  by the adjunction inequality \eqref{adj},    we have
 \begin{equation}\label{e: g=0 adj}
 \sum_{i=1}^n r_i(r_i-1) = 2(p-1)(q-1).
 \end{equation}
 Since $g_{\omega}(A)=0$ (by Condition 1), $2g_{\omega}(A)-2=K_{\omega}\cdot A+A\cdot A$ and $A\cdot A<0$,  we have $-1\leq K_{\omega}\cdot A =\sum_{i=1}^n r_i -2p-2q$.
Namely,
\begin{equation} \label{e: g=0, r_i}
\sum_{i=1}^n r_i = 2p+2q + k, \quad k\geq -1.
\end{equation}
Now if $p>1, q \geq 1$, since $n \leq 4$, by the   Cauchy-Schwartz inequality and \eqref{e: g=0, r_i},
\begin{equation}\label{e: g=0, r_i r_i}
\sum_{i=1}^n r_i^2 \geq  [\sum_{i=1}^n r_i]^2  /4 \geq (2p+2q+k)^2/4.
\end{equation}
 It follows  from    \eqref{e: g=0, r_i}  and \eqref{e: g=0, r_i r_i} that

 \begin{align} \sum_{i=1}^n r_i^2- \sum_{i=1}^n r_i
 &\geq (2p+2q+k)^2/4 -(2p+2q+k)\nonumber\\
 &=(p+q)^2+(p+q)k + \frac{k^2}{4} -k -2(p+q)\nonumber\\
&= [2pq+2 -2(p+q)]+(p^2+pk-2) + (q^2 +qk-k) +\frac{k^2}{4} \nonumber
\end{align}
Since $p\geq 2,  q\geq 1,  k\geq -1$,  the last 3 terms are all non-negative, and $(q^2 +qk-k)$ is always strictly positive.
To see that $(q^2 +qk-k)$ is always strictly positive, we separate into 2 cases: $k\geq 0$ and $k=-1$. If $k\geq 0$, we have $q^2+qk-k=q^2+k(q-1)\geq q^2\geq 1$. If $k=-1$, we have
$q^2+qk-k=q(q+k)-k=q(q-1)+1\geq 1$.

Therefore we have
$\sum_{i=1}^n r_i^2- \sum_{i=1}^n r_i  >  2(p-1)(q-1)$,
contradicting  with \eqref{e: g=0 adj}.

\end{proof}



\subsubsection{The classification result}

We can explicitly write down all the classes in $\mathcal S_{\w}^{<0}$ for a reduced symplectic form $\w$.

\begin{prp} \label{negative sphere}  Any class in  $\mathcal S_{\w}^{<0}$ lies in one  of the following three disjoint  sets:
\begin{align} \mB&=\{B-lF-\sum r_i E'_i, l\geq-1, r_i\in\{0,1\}\};\nonumber\cr
 \mF&=\{F-\sum r_i E'_i, r_i\in\{0,1\}\};\nonumber\cr
 \mE&=\{E'_j-\sum r_i E'_i, j<i, r_i\in\{0,1\}\}.\nonumber
 \end{align}
In particular, $\mathcal S_{\w}^{<0}$ is a finite set.

 Moreover, a class in $\mB, \mF, \mE$ is in  $\mathcal S_{\w}^{<0}$ if and only if it has positive $\omega$-area.
 $\mS_\w^{<0}$

\end{prp}

\begin{proof} We first show that  $\mathcal S_{\w}^{<0}$ is contained in the union of $\mB, \mF, \mE$.
Suppose $A=pB+qF-\sum_i r_iE'_i$ is such a class.
By Lemma \ref{sphere}, $p=0$ or $1$.

$\bullet$ $p=1$.

If $p=1$, then $r_i=0$ or $1$ as shown in  Lemma \ref{classes}.
So
 $$A= B+qF-\sum r_i E'_i, r_i\in\{0,1\}.$$
 And the condition $A\cdot A<0$ and $n\leq 4$ implies that $q\leq 1$.

 $\bullet$ $p=0$.

In this case,    we have shown that $q=0$ or $1$  in Lemma \ref{classes}.

If $p=0, q= 0$,
 the adjunction inequality \eqref{adj}  is   $2-\sum_{i=1}^n r_i(r_i-1) \geq 0$.
Let $x$ be an integer.
Notice that $x(x-1)\geq 0$, and $x(x-1)=0$ if $x=0$ or $1$. Notice also that since $x$ is an integer, pif $x(x-1)>0$ then
$x(x-1)\geq 2$, and $x(x-1)=2$ if $x=2$ or $x=-1$. We see there is at most one $j$ such that $r_j\ne 0$ or $\ne 1$,
and for this $j$, $r_j=-1$ or $2$.   By considering  the area of such a class, we must have   $r_j=-1$, and $j<i$ for any
$r_i=1$.
Therefore, in this case,
$A$  can only be of the form
$$E'_j-\sum r_iE'_i, i>j, \quad r_i\in \{ 0,1 \}.$$

We are left with  $q=1$.  In this case,  the adjunction inequality \eqref{adj}  is of the form  $-\sum_{i=1}^n r_i(r_i-1) \geq 0$.
So we must have $r_i=0$ or $1$. Namely,
 $$A=F-\sum r_i E'_i, \quad r_i\in\{0,1\}.$$

 Now we show that $\mathcal S_{\w}^{<0}$ is a finite set. By the first statement, it suffices to show that the intersection of
 $\mathcal S_{\w}^{<0}$ with $\mB$, $\mF$, or $\mE$ is a finite set.
 The sets $\mF$ and $\mE$ are clearly finite since $r_i=0$ or $1$. To show that  the set $\mB\cap \mathcal S_{\w}^{<0}$ is finite for a reduced $\w$ it suffices to show that $[\w]$ pairs positively with finitely many elements in $\mB$.
  A reduce  $\w$ has class
 $[\w]=B+\mu F-\sum  a_i E_i'$ satisfying \eqref{reducedBF}, and hence
 the pairing between $[\w]$ and a class in $\mB$ is $\mu-l-\sum a_i r_i\leq mu-l$, which is negative if $l>\mu$.

 To show that  a class $A$  in $\mB, \mF, \mE$ is in  $\mathcal S_{\w}^{<0}$ if  it has positive $\omega$-area, we need to construct an $\omega$-symplectic sphere in the class $A$.
 Notice that we can get an embedded holomorphic rational curve in the class $A$
via small K\" ahler blowup.
Then we apply Theorem 2.11  in \cite{DoL10}.

\end{proof}


For convenience,  we list the set $R^+_{n+1}=\mB^{-2}\coprod \mF^{-2} \coprod \mE^{-2}$  for $S^2\times S^2  \# n{\overline {\CC P^2}}$ (see Proposition  \ref{MLS}).
\begin{itemize}
\item
$R^+_{1}=\{ B-F \}$.
\item
$ R^+_{2}=\{ B-F\}$.
\item
 $R^+_{3}=\{  B-E'_1-E'_2,\, B-F, \,F-E'_1-E'_2, \,E'_1-E'_2 \}$.
\item
$R^+_{4}=\{  B-E'_i-E'_j, \, B-F, \, F-E'_i-E'_j, \, E'_j-E'_i, \, 1\leq j<i\leq 3\}$.
\item
$ R^+_{5}=\{ B+F-E'_1-E'_2-E'_3-E'_4,\, B-E'_i-E'_j,\, B-F, \, F-E'_i-E'_j,\,
E'_j-E'_i, \, 1\leq j< i\leq 4\}$.
\end{itemize}

We remark that  Proposition \ref{negative sphere}  overlaps with and can be derived from results in Section 4.1 in  \cite{Zha17}, which are in a slightly different context. In \cite{Zha17}, while the almost complex structure is assumed to have the standard canonical class, the symplectic structure is not assumed to be reduced.

 \begin{rmk}\label{toric-2}
   The following observation (coming from toric moment polytope, also carefully written in \cite{AP13} section 4.2 and 4.3) will be
   used in Lemma \ref{injection}: for a symplectic rational surface $X$ with  $\chi(X)\leq 7$, any
   $A\in \mS_{\w}^{-1}\cup \mS_{\w}^{-2}$ arises as
   an edge of a toric moment polytope. Consequently, there is a semi-free circle action having a symplectic sphere $S$
   as a component of the fixed loci with $[S]=A$. See also Remark \ref{s1gen}.
\end{rmk}

\subsection{Cusp decomposition and level 2 stratification}
We continue to assume that $\chi(X)\leq 8$ and $\w$ is reduced.

\subsubsection{Cusp curve decompositions of a class in  \texorpdfstring{$\mathcal S_{\w}^{<0}$}{Lg}}


Here is an important consequence of  Lemma \ref{classes} and    Proposition \ref{sphere}.

\begin{prp}\label{decp}
For any stable curve  in a class $A \in \mathcal S_{\w}^{\leq -2}$,

$\bullet$ there are no components with positive self-intersection,

$\bullet$ any self-intersection  zero component   is in the class  $B$ or $kF, k>0$,

$\bullet$ any simple component with negative self-intersection is embedded and hence its class is in $\mathcal S_{\w}^{<0}$.

\end{prp}

\begin{proof}
Suppose $A =pB+qF-\sum_i r_iE'_i \in \mathcal S_{\w}^{\leq -2}$.
Then by Lemma \ref{sphere}, $p=0  \hbox{ or }  p= 1$,  and if $p=1$ then $q\leq 1$.
We argue by contradiction to show that there are no components with positive self-intersection in any cusp curve decomposition of $A$.
Assume   the cusp  decomposition \eqref{cusp curve equation} of $A$  has the following form for some $J$,  $$A=[C']+ \sum_{\alpha}m_{\alpha}[C_{\alpha}],$$
 where $C'$  is a simple $J$-holomorphic rational curve with  positive self-intersection and $[C']=p'B+q'F-\sum_i r'_iE'_i$,
$m_{\alpha}>0$ and each $C_{\alpha}$ is a simple $J$-holomorphic rational curve. Notice that it is possible that $C'=C_{\alpha}$ for some $\alpha$.

 By Lemma \ref{classes},  the $B$ coefficients of $A$ (the number $p$),  $[C']$ (the number $p'$) or any $ [C_{\alpha}]$ (denoted by $p_{\alpha}$) are all  non-negative. Moreover, since $A$ admit some embedded J-holomorphic representative, $A\cdot A<0$ and $g_{\w}(A)=0$,
 by Lemma \ref{sphere} (or Proposition \ref{negative sphere}), $p=0$ or $1$.
 Since $[C']\cdot [C']>0$, we have $p'> 0$. So we have $p=\sum_{\alpha} p_{\alpha}+p'\geq p' > 0$.
 Since $p=0$ or $1$, we must have   $p=p'=1,  p_{\alpha}=0, \forall \alpha$.

 Now let us inspect the $F$ coefficients $q, q', q_{\alpha} $. First of all, we have  $q\leq 1$, by Proposition \ref{negative sphere}.
 For the class $[C']$, since $[C']\cdot [C']>0$ we have
  $p'q'=q'\geq 1$.  For any $ [C_{\alpha}]$ class, since the $B$ coefficient $p_{\alpha}$ is zero, by Lemma \ref{classes} the $F$ coefficient $q_{\alpha}$ is $0$ or $1$.
 Hence $q=q'+\sum_{\alpha}q_{\alpha}\geq q'\geq 1$. Since we have also observed that $q\leq 1$,    we conclude that both  $q=1$ and $q'=1$, $q_{\alpha}=0, \forall \alpha$.

 Since $p=q=1, A\cdot A\leq -2$, we must have $A=B+F-E'_1-E'_2-E'_3-E'_4$ from Proposition \ref{negative sphere}. In addition, since $p'=q'=1$,
 by Lemma \ref{classes},  $[C']=B+F-\sum_i r'_iE'_i$, $r'_i\in\{0,1\}$.
However, the sum of the curve class   $\sum_{\alpha}m_{\alpha}[C_{\alpha}] = A- [C']= -E'_1-E'_2-E'_3-E'_4 +\sum_i r'_iE'_i$, $r'_i\in\{0,1\}, 1\leq i \leq 4,$  has negative symplectic area. Contradiction!
Hence there are no positive self-intersection components
 in a cusp curve decomposition \eqref{cusp curve equation} of a class $A \in \mathcal S_{\w}^{\leq -2}$.

Next, we analyze the possible square 0 classes in the decomposition. From the analysis above, we only need to deal with the case that either $p'=0$ or $q'=0$.
For the case $p'=0$, the only possible square zero classes are  $kF, k\in \ZZ^+$. And for the case $q'=0$, the only possible square zero class is $B$.

The last bullet simple component with negative self-intersection follows from  Condition \ref{negsphemb}.
\end{proof}

\begin{rmk}\label{overlap}
Note that the following results  overlap with \cite{Zha17}: Lemma 4.1 and Proposition 4.6 in \cite{Zha17} cover Theorem \ref{negative sphere} of this paper up to Cremona transformations; Lemma 4.12 in \cite{Zha17} also provided useful information about \eqref{cusp curve} for small rational surfaces, and will be useful in \cite{L1} for the general stratification.

\end{rmk}




\subsubsection{Level 2 stratification}
We are ready to prove  Theorem \ref{level 2}, which we restate here.

\begin{thm}\label{rational}
For a symplectic rational surface $(X, \w)$ with  $\chi(X)\leq 8$,
$\mX_4=\amalg_{cod(\mC)\geq4} \mJ_{\mC}$ and $\mX_2=\amalg_{cod(\mC)\geq2} \mJ_{\mC},$
 are closed subsets  in $\mX_0=\mJ_{\w}$. Consequently,

(i).  $\mX_0 -\mX_4$ is a manifold.

(ii).  $\mX_2 - \mX_4$  is closed in  $\mX_0 - \mX_4$.

(iii). $\mX_2 -\mX_4$ is a manifold.

(iv).  $\mX_2-\mX_4$ is a submanifold of  $\mX_0 -\mX_4$.

\end{thm}

\begin{proof}
We can assume that $\omega$ is reduced by Theorem    \ref{redtran}.
Let us recall that,  by Lemma \ref{rat1},  Condition \ref{negsphemb} applies here.

We first  show that $\mX_2$ is  closed  in $\mX_0$, namely,  $\overline{\mX_2} \cap (\mX_0-\mX_2) = \emptyset$.
We will argue by contradiction.

Suppose   $\overline{\mX_2} \cap (\mX_0-\mX_2) \ne \emptyset$.  Then there is a sequence  $ \{J_n\}$ in $\mX_2$ converging to $ J'\in (\mX_0-\mX_2)=\mJ_{\emptyset}.$
Since $\mS_{\w}^{<0}$ is finite by Proposition \ref{negative sphere}, which means there are only finitely many labeling sets $\mC$ for which $\mJ_{\mC}$ is nonempty. Thus we can assume that  the sequence  $\{J_n\} $ is in a single prime set $\mJ_{\mC}$ and we can apply Lemma \ref{l:dec} here.
By Condition \ref{negsphemb},  for $J' \in \mX_0-\mX_2$, every simple $J'$-holomorphic rational curve has self-intersection  at least $-1$ and the ones with self-intersection $-1$ are embedded.
Suppose $A\in \mC$. Then for each $J_n\in \mJ_{\mC}\subset  \mX_2$ there is  one embedded $J_n$-holomorphic rational curve in the class $A$.
By Lemma \ref{l:dec} the class $A\in  \mS_{\w}^{\leq -2}$   admits a decomposition  as in equation \eqref{cusp curve}, with no class having square less than  $-1$.
Moreover, by Proposition  \ref{decp},
the decompositions can be written as  $$A=rB+ \sum_i a_iF+ \sum_j b_jD_j,$$ where $a_i, b_j$ are non-negative integers, $r \in \{0,1\},$ and $D_j\in \mathcal S^{-1}_{\omega}$ by Condition 1.
By pairing with $K_{\omega}$ on both sides. The left hand side is
 $ A\cdot K_{\omega}\geq 0$, and the right hand side is $r B \cdot K_{\omega}+ \sum a_i F\cdot K_{\omega} +\sum_j b_j D_i \cdot K_{\omega}\leq 0,$ since $ D_i \cdot K_{\omega} =-1, F \cdot K_{\omega}=B \cdot K_{\omega}=-2$.  This is a contradiction.

We next  show that $\mX_4$ is closed in $\mX_0$, namely,
$\overline{\mX_4} \cap (\mX_0-\mX_4)= \emptyset.$
Since $\mX_4\subset \mX_2$ and $\mX_2$ is closed in $\mX$, it suffices to show that $\mX_4$ is closed in $\mX_2$. The argument is similar.
By Condition \ref{negsphemb}, for each $J' \in \mX_2-\mX_4$ every simple $J'$-holomorphic rational curve has self-intersection at least $-2$, there is exactly one simple $J'$-holomorphic rational curve with self-intersection  $-2$,
and the ones with self-intersection $-1$ and $-2$
are embedded.
For each $J_n\in \mX_4$ there is either  one embedded $J_n$-holomorphic rational curve with self-intersection at most $-3$, or there are at least two embedded $J_n$-holomorphic rational curves  with self-intersection $-2$.

Let us   assume that  $\overline{\mX_4} \cap (\mX_2-\mX_4) \ne \emptyset$, and there is  a sequence  $\{J_n\}$  in $\mX_4$   converging to $J'\in \mJ_{\mC'}\subset (\mX_2-\mX_4)$   for some labeling set $\mC'$.  Notice that $\mC'$ is of the form $\{A'\}$ for some $A'\in \mS_\w^{-2}$.

Again, because of the finiteness of labeling sets, we can assume $\{J_n\}$ all lie in $\mJ_{\mC}$ for some labeling set $\mC$.
So we are again in the situation of Lemma \ref{l:dec}.

\noindent 1) Assume first $\mC$ contains a class $A\in \mS_{\w}^{\leq -3}$.
Then by Proposition  \ref{decp},
there is a cusp curve decomposition  $A= c'A'+ \sum c_i A_i$ with $A_i \in   \mS_{\w}^{-1} \coprod \mS_{\w}^{0}$, $c'\geq 0, c_i\geq 1$.
Pair the cusp curve decompositions with $K_{\w}$.  We get a contradiction since $K_{\w}\cdot A>0$ while $K_{\w}\cdot A'=0, K_{\w}\cdot A_i< 0$.

\noindent 2) Otherwise, $\mC$ contains  at least two   classes in $\mS_{\w}^{ -2}$ and one of them has to be distinct from $A'$. Call this class $A$.    By Proposition  \ref{decp},
there is  a cusp curve decomposition $A= c'A' +\sum c_i A_i$ with  $\{A_i\} \subset  \mS_{\w}^{-1} \coprod \mS_{\w}^{0}, c'\geq 0, c_i\geq 1$.
We claim that the set $\{A_i\}$ is not empty. This is true because $A\ne A'$ and  $c'A'$ is not in $S_{\w}^{-2}$ for any  $c'\ne 1$. Now
 pair the cusp curve decompositions with $K_{\w}$.  We again get a contradiction since  $K_{\w}\cdot A=0$ while $K_{\w}\cdot A'=0$ and
$K_{\w}\cdot A_i<0$.

So we have proved that $\mX_2$ and $\mX_4$ are closed in $\mJ_{\w}.$\\

Next, let us establish the claims (i)-(iv).

(i). $\mX_0 -\mX_4$ is a manifold.  This statement is true since  $\mX_4$ is closed in  $\mX_0$ and $\mX_0$ is a manifold. Similarly, $\mX_0-\mX_2$ is a manifold since $\mX_2$ is also closed in $\mX_0$. And both
$\mX_0-\mX_4$ and $\mX_0-\mX_2$ are open submanifolds of $\mX_0$.

(ii). $\mX_2 -  \mX_4$  is closed in  $\mX_0 - \mX_4$.    This follows from the fact that   $\mX_2$ is closed in  $\mX_0$.

(iii). $\mX_2 -\mX_4$ is a manifold.
This statement follows from   the fact that  $\mX_2-\mX_4$ is a submanifold of $\mX_0$. This latter fact follows from Proposition \ref{submfld} and the fact that $\mX_2 - \mX_4 $ is the disjoint union of codimension  2  prime sets $\mathcal J_{A}$ over $A\in \mathcal S_{\omega}^{-2}$.

(iv). $\mX_2-\mX_4$ is a closed submanifold of  $\mX_0 -\mX_4$.    Since $\mX_0-\mX_4$ is an open submanifold of $\mX_0$,  $\mX_2-\mX_4$ is also   a submanifold of $\mX_0-\mX_4$.



Hence this proves that $\mX_{4}(=\mX_{3}) \subset \mX_2 (= \mX_1) \subset \mX_0 = \mJ_{\omega},$
is a level 2 stratification.
\end{proof}

It means that   the decomposition
 $\mJ_{\omega}$ is a stratification at the first two levels with top stratum $\mJ_{open}$.


\begin{rmk} \label{stratification}
In a separate paper \cite{L1} the first author will further show that, for a symplectic rational surface $(X, \w)$ with $\chi(X)\leq 8$,
  this filtration of $\mJ_{\omega}$ fits into the following notion of   {\bf even stratification}
 in the  $\infty$-dimensional setting (For finite dimension, see eg.  \cite{GM88}):

\begin{dfn}\label{str}
For an $\infty $-dimensional real Fr\'echet manifold $\mX$, a finite filtration
 of $\mX$ is called an  even stratification if it is a sequence of {\bf closed} subspaces
$$ \mX_{2n+2}:=\emptyset  \subset \mX_{2n} \subset \mX_{2n-2} \ldots \subset \mX_2 \subset \mX_0 = \mX ,$$
where $\mX_{2i}- \mX_{2i+2}$ is a submanifold $\mX_0$ of real codimension $2i$ for $0\leq i\leq n$.
\end{dfn}

\end{rmk}

 There is a decomposition of $\mJ_{\omega}$   for
   $({\CC P^2}\# 3 \overline{\CC P^2},\omega)$  in Lemma 2.10 of \cite{AP13}
   by the existence of a certain negative self-intersection curve.
  They have also shown that this decomposition is a stratification with finite-codimensional
  submanifolds as strata.
Our decomposition is finer in the sense that each stratum
    in \cite{AP13} is a union of prime submanifolds in our decomposition. In particular,
     our decomposition being a stratification as in Definition \ref{str} implies their decomposition
     is a stratification.

\subsection{Symplectic \texorpdfstring{$(-2)$-}{Lg}spheres and \texorpdfstring{$H_1(\mJ_{open})$}{Lg} }

\subsubsection{Enumerating the components of \texorpdfstring{$\mX_2-\mX_4$ by $\mathcal S_{\w}^{-2} $}{Lg}}\label{-2}
Let $N_{\w}$ denote the cardinality of $\mathcal S_{\w}^{-2}$. We have the following  crucial result about codimension 2 strata in the stratification of $\mJ_{\omega}$.

\begin{prp}\label{conn}
  For $X=S^2\times S^2 \# n\overline{\CC P^2}, n=0,1,2,3$,
 $\mJ_{A}$ is path connected if $A\in \mS_\w^{-2}$. Hence the number of  path connected components in  $\mX_2-\mX_4$ is $N_{\w}$.
\end{prp}

We need the following lemma, which was stated   in \cite{LLW15}:
\begin{lma}\label{tran}
 For a symplectic rational manifold $(X, \omega)$ with $X=S^2\times S^2 \# n\overline{\CC P^2}, n\geq 0$,
 the group $Symp_h(X, \omega)$ acts
 transitively on the space of homologous $(-2)$-symplectic spheres.
 \end{lma}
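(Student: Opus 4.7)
The plan is to establish the transitive action in three stages: lift each $-2$ symplectic sphere to an almost complex structure inside the codimension-$2$ prime submanifold $\mJ_A$, join the two resulting almost complex structures by a path inside $\mJ_A$ while carrying their embedded holomorphic representatives along, and then apply symplectic isotopy extension to upgrade this path of spheres into a homologically trivial ambient symplectomorphism.

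First, given two embedded $\w$-symplectic spheres $C_0, C_1$ in the same class $A$ with $A\cdot A=-2$, I would pick $\w$-tame almost complex structures $J_0, J_1 \in \mJ_\w$ such that each $C_i$ is $J_i$-holomorphic; the space of tame $J$'s compatible with any fixed embedded $\w$-symplectic surface is non-empty and contractible, so this is immediate. After a small generic perturbation of each $J_i$ one may further assume $J_i\in\mJ_A$ in the sense of Lemma \ref{-2number}, i.e. $A$ is the only class in $\mathcal S_\w^{\leq -2}$ admitting an embedded $J_i$-holomorphic representative. This perturbation is available since, by Theorem \ref{rational}, the higher stratum $X_4$ has real codimension at least $4$ in $\mJ_\w$.

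Second, I would connect $J_0$ to $J_1$ by a smooth path inside $\mJ_A$. By Lemma \ref{-2number}, $\mJ_A$ is a Fr\'echet submanifold of real codimension $2$ in $\mJ_\w$, and by Theorem \ref{rational} the set $X_2\setminus X_4$ decomposes as the disjoint union $\coprod_{A'\in\mathcal S_\w^{-2}}\mJ_{A'}$ of closed codimension-$2$ submanifolds inside the (simply connected, since $X_4$ has codimension $\geq 4$) open manifold $\mJ_\w\setminus X_4$. A standard transversality argument then produces a path $J_t$ from $J_0$ to $J_1$ lying entirely in the single component $\mJ_A$. Along a generic such path, the unique embedded $J_t$-holomorphic sphere $C_t$ representing $A$ varies smoothly: any degeneration would, by Gromov compactness combined with Propositions \ref{sphere} and \ref{decp}, force a nontrivial decomposition of $A$ whose negative summands push $J_t$ into a strictly deeper stratum, contradicting $J_t\in\mJ_A$.

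Third, I would feed the smooth family $C_t$ of embedded $\w$-symplectic spheres into the symplectic submanifold isotopy extension theorem — a standard consequence of the symplectic neighborhood theorem together with a Moser-type argument — to obtain a symplectic isotopy $\phi_t\in Symp(X,\w)$ with $\phi_0=\id$ and $\phi_t(C_0)=C_t$. Because $\phi_1$ is joined to $\id$ through symplectomorphisms, it acts trivially on $H_*(X;\ZZ)$, whence $\phi_1\in Symp_h(X,\w)$ and $\phi_1(C_0)=C_1$, which gives the desired transitivity. The main obstacle is the middle step, and within it the control of bubbling: pure connectedness of $\mJ_A$ reduces to a routine codimension-$2$ transversality argument in $\mJ_\w\setminus X_4$, but the sharper content is that the family $C_t$ survives the whole path as an embedded sphere, and this is exactly where the explicit classifications in Propositions \ref{sphere} and \ref{decp} do the essential work by forcing any cusp configuration appearing in a limit to land in a strictly deeper stratum.
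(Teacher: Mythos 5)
Your third step (isotopy extension for a smooth family of embedded symplectic spheres) is fine, but the heart of your argument fails at the second step. To connect $J_0$ to $J_1$ \emph{inside} $\mJ_A$ you assert that ``a standard transversality argument'' produces such a path, citing only that $\mJ_A$ is a codimension-$2$ submanifold of $\mJ_\w\setminus X_4$ and that the ambient space is highly connected. Transversality of this kind only lets paths in the \emph{ambient} space avoid strata of codimension $\geq 2$ (or $\geq 3$); it says nothing about the path-connectedness of the stratum $\mJ_A$ itself — a closed codimension-$2$ submanifold of a contractible Fr\'echet manifold can perfectly well be disconnected. In fact, in the paper the number of components of $\mJ_A$ is precisely the unknown quantity: the relative Alexander duality gives $H^1(\mJ_{open})=\oplus_{A_i}H^0(\mJ_{A_i})$, and the connectedness of each $\mJ_{A}$ is deduced in Corollary \ref{conn} \emph{from} Lemma \ref{tran} together with the connectedness of $Symp_h$, not the other way around. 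So your proposal assumes (in the guise of ``routine transversality'') essentially the statement to be proved, and the logical dependency runs backwards relative to how these results are actually established. There is also a smaller unaddressed point in your first step: a generic perturbation pushing $J_i$ into $\mJ_A$ must be performed while keeping the \emph{given} sphere $C_i$ holomorphic, which requires perturbing within the space of $J$ adapted to $C_i$ rather than in all of $\mJ_\w$.

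The paper's proof is entirely different and avoids any statement about the topology of $\mJ_A$. After a base change putting the class in the form $B-F$, one finds (by \cite{MO15}) a collection of disjoint exceptional spheres in the classes $E_1,\dots,E_k$ disjoint from, or in controlled position relative to, the given $-2$ sphere, and blows them down to obtain $(S^2\times S^2,\tilde S_i)$ together with a symplectic ball packing in the complement. Uniqueness results for symplectic forms and for symplectic spheres in $S^2\times S^2$ (\cite{LM96}, \cite{AM00}) give a symplectomorphism matching the two blown-down spheres, and the connectedness of ball packings relative to a symplectic sphere (\cite{BLW12}, Theorem 1.1) supplies a compactly supported Hamiltonian isotopy matching the two ball packings; blowing back up yields a symplectomorphism carrying one $-2$ sphere to the other which preserves the classes $B,F,E_1,\dots,E_k$, hence lies in $Symp_h$. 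If you want to salvage your approach, you would need an independent proof that $\mJ_A$ is connected, and at present the only route to that in this framework is the very lemma you are trying to prove.
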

 \begin{proof}
 Here we give a  proof  following  steps sketched in   \cite{LW12} and \cite{BLW12}.
 Let $S_1, S_2$ be two homologous symplectic $(-2)$-spheres.
  Without loss of generality, we can assume the symplectic sphere $S_i$ is in the
  homology class $[S_i]=B-F,$ since we can always change basis in $H_2(X,\RR).$  For each  $(X, \w, S_i)$, by \cite{MO15}, there
  is  a set $\mC_i$ of disjoint $(-1)$-symplectic spheres $C_i^l$ for $l=1,\cdots, k$ such that
  $$ [C_i^l]=E'_l, \text{for}\quad l= 1,\cdots n. $$
  Blowing down the set $\{ C_i^1, \cdots C_i^n\}$ separately, one obtains a $4-$tuple $(X_i, \w_i, \tilde{S_i},\mB_i)$,
  where  $(X_i,\w_i)$ is are symplectic $S^2\times S^2$  with $[\w_1]=[\w_2]$,
  $\tilde{S_i}$ a symplectic sphere in  $X_i$,
  and $\mB_i=\{B_i^1,\cdots,  B_i^n\}$ is a symplectic ball packing in $X_i\setminus \tilde{S_i}$
  corresponding to $\mC_i$.
For any two such tuples  $(X_1, \w_1, \tilde{S_1},\mB_1)$ and  $(X_2, \w_2, \tilde{S_2},\mB_2)$, since the symplectic forms are homologous, by \cite{LM96},
there is a symplectomorphism $\Phi$
from $(X_1,\w_1,  \tilde{S_1})$ to $(X_2, \w_2, \tilde{S_2}),$  such that for a fixed $l$,
$Vol(\Phi(B_1^l))=Vol(B_2^l)$.
Then according to \cite{AM00}, we
can choose this $\Phi$ such that the two symplectic spheres are
isotopic, i.e. $\Phi(\tilde{S_1}) = \tilde{S_2}$. Applying Theorem 1.1 in  \cite{BLW12}, there is a compactly supported
Hamiltonian isotopy $\iota$ of $(X_2, \w_2, \tilde{S_2})$ such that the symplectic ball packings
$\Phi(\mB_1)$ and $\mB_2$ are connected by $\iota$ in $X_2\setminus \tilde{S_2}.$ Then
$\iota \circ \Phi$ is a symplecotomorphism between the tuples $(X_i, \w_i, \tilde{S_i},\mB_i)$
and hence blowing up induces a symplecotomorphism $\psi: (X_1,\w_1,  \tilde{S_1},\mB_1) \rightarrow
(X_2, \w_2, \tilde{S_2},\mB_2). $  Further note  that $\psi$ preserves homology classes
$B,F, E'_1, E'_2, \cdots, E'_n$ and hence $\psi \in Symp_h(X,\w).$
 \end{proof}

\begin{proof}[Proof of Proposition \ref{conn}]
Applying Theorem 1.1 in \cite{LLW15}, $Symp_h(X, \omega)$ is itself path connected  for $X=S^2\times S^2 \# n\overline{\CC P^2}, n=0,1,2,3$.   Therefore, for $A\in \mS_\w^{-2}$, the space   of  symplectic $(-2)$-spheres  in the class $A$ is path connected
by  Lemma \ref{tran}.  Let  $\mathcal Z_A$ denote the space   of  symplectic $(-2)$-spheres  in the class $A$.
 Since the space of almost complex structures making a symplectic sphere pseudo-holomorphic is weakly contractible (\cite{Eva11}),  and there is a unique embedded
 $J-$holomorphic sphere in the class $A$  for each $J\in \mJ_{A}$,
 $\mJ_{A}$ fibers over  $\mathcal Z_A$ with weakly contractible fibers.  Hence $\mJ_{A}$ is weakly homotopic to  $\mathcal Z_A$. In particular, $\mJ_{A}$ is path connected.

The last claim now follows from the disjoint union decomposition $\mX_2-\mX_4=\coprod_{A\in \mS_\w^{-2}} \mJ_{A}$.
\end{proof}

\subsubsection{Relative Alexander-Pontrjagin duality for regular Fr\'echet stratification}
Let $\mX$ be a Hausdorff space,  $ \mZ \subset \mY $  a closed subset of
$\mX$ such that $\mX-\mZ, \mY-\mZ$ are paracompact manifolds modeled by topological linear spaces.  Suppose $\mY-\mZ$ is a closed co-oriented submanifold of $\mX-\mZ$ of  codimension $p$, then we say $(\mY,\mZ)$ is a closed relative submanifold of $(\mX,\mZ)$ of codimension $p$.
 We have the following relative version of  Alexander-Pontrjagin duality  in \cite{Eells61} when taking constant coefficient:


\begin{thm}\label{alex}
   In the above situation,  we have an isomorphism $H^i(\mX-\mZ,\mX-\mY; G) \cong H^{i-p}(\mY-\mZ; G)$ for  any Abelian coefficient group $G$.

 \end{thm}

Now we restate and prove Corollary \ref{Jopen} here:

\begin{cor}
For  a symplectic rational surface with  $\chi(X)\leq 8$ and any Abelian group $G$,
$H^1(\mJ_{open}; G)= \oplus_{A_i \in \mathcal S_{\omega}^{-2}} H^0(\mJ_{A_i}; G)$.

If we further assume that $\chi(X)\leq 7$, then for  each  $ {A_i \in \mathcal S_{\omega}^{-2}}  $,  $\mJ_{A_i}$ is path connected and hence $H^1(\mJ_{open};G)=G^{N_{w}}$, where $N_{\omega}$ is the cardinality of $\mathcal S_{\omega}^{-2}$.
It follows from the universal coefficient theorem that   $H_1(\mJ_{open}; \ZZ)=\ZZ^{N_{\w}}$.
\end{cor}
\begin{proof}

For  $ \CC P^2 $, $\mJ_{open}=\mJ_{\w}$ and $N_{\w}=0$ so the statement holds trivially.

For any other $X$ with $\chi(X)\leq 8$,  let $\mX=\mX_0, \mY=\mX_2, \mZ=\mX_4 $. By Proposition \ref{submfld} and Theorem \ref{rational},   the conditions  in
Theorem \ref{alex} are satisfied.
Consider the cohomology long exact sequence  for the pair  $(\mX-\mZ,\mX-\mY)$ and replace  $H^i(\mX-\mZ,\mX-\mY;G)$  by $H^{i-p}(\mY-\mZ;G)$ by Theorem \ref{alex}. Then we have the long exact sequence
\[
\dotsb \rightarrow H^{i-1}(\mX-\mY; G)  \rightarrow H^{i-p}(\mY-\mZ;G)  \rightarrow H^{i }(\mX-\mZ;G)  \rightarrow H^{i}(\mX-\mY;G)  \rightarrow  \dotsb
\]
  Since $\mX=\mX_0$ is contractible, $\mZ=\mX_4$ is a union of submanifolds of  codimension 4 or higher. Note that by Remark \ref{invlim} (the same convention as in \cite{AGK09}), each $\mX_i$ is the inverse limit of its  Banach counterparts $\{\mX_i^p\}$, where  $\mX_0^p$ is contractible as well and $\mZ^p=\mX_4^p$ has codimension 4 or higher in $\mX_0^p$. We have  transversality in the Banach setting (see Chapter 2, page 27-28 of \cite{Lang99}) and hence any  map into $\mX_0^p-\mX_4^p$ with domain being $S^1$ or $S^2$ is homotopic to a constant map  since  $\mX_0^p$ is contractible and $\mZ^p=\mX_4^p$ has codimension 4 or higher in $\mX_0^p$. Then we know that $\pi_1(\mX^p-\mZ^p)=\pi_2(\mX^p-\mZ^p)=0$ in each Banach setting. Hence by the Hurewicz Theorem, $H_1(\mX^p-\mZ^p,\ZZ)=H_2(\mX^p-\mZ^p,\ZZ)=0$ for each $p$.   Then by the universal coefficient Theorem,  for any abelian group $G$,  $H^{1 }(\mX^p-\mZ^p ;G) =H^{2 }(\mX^p-\mZ^p;G )=0$ for each $p$. Then taking the inverse limit, we have  $H^{1 }(\mX-\mZ ;G) =H^{2}(\mX-\mZ;G )=0$ in the Fr\'echet setting by Remark \ref{invlim}.

 Setting $i=p=2$, we have the exact sequence
\[
0=H^{1 }(\mX-\mZ ;G) \rightarrow H^{1}(\mX-\mY;G)  \rightarrow H^{0}(\mY-\mZ;G)  \rightarrow H^{2 }(\mX-\mZ;G )=0,
\]
namely,  $H^{1}(\mX-\mY;G) =H^{0}(\mY-\mZ;G) .$ Notice that  $\mX-\mY=\mJ_{open}$ and $\mY-\mZ$ is the disjoint union of $\mJ_ A$ for $A\in \mS_\w^{-2}$, we have
$H^1(\mJ_{open}; G)= \oplus_{A_i \in \mathcal S_{\omega}^{-2}} H^0(\mJ_{A_i}; G)$.

When  $\chi(X)\leq 7,$  the statement $H^1(\mJ_{open};G)=G^{N_{\w}}$  follows from Proposition \ref{conn}.
Then,  by the Universal Coefficient Theorem,
we have $$ G^{N_\w} \cong H^0 (\mX_2-\mX_4;G) \cong H^1(\mX_0-\mX_2 ;G) \cong Hom(H_1(\mX_0-\mX_2;\ZZ);G)$$
 for any Abelian group $G$.   This implies that $H_1(\mX_0-\mX_2;\ZZ) =\ZZ^{N_\w}.$

\end{proof}

\begin{rmk}  \label{invlim} As noted in \cite{AGK09} (Convention part or Remark 2.2),  each Fr\'echet manifold we work with can naturally be interpreted as the inverse limit of a sequence of Banach manifolds. In fact, Proposition \ref{stratum} is valid and established first in the Banach settings.
Moreover,  it is noted in \cite{AGK09} that the successive inclusions between the Banach manifolds are weak homotopy equivalences. Therefore the results about the homology and homotopy groups in the Fr\'echet setting can be interpreted as the corresponding results for each Banach manifold in the sequence.
\end{rmk}

\begin{rmk}

An absolute version of Alexander-Pontrjagin duality in \cite{Eells61} was applied by Abreu in \cite{Abr98} to detect the topology of $\mJ_{open}$ for $S^2\times S^2$ with a symplectic form with ratio within $(1, 2)$.
In \cite{L1},  the first author will establish  an Alexander-Pontrjagin duality for even stratifications as in Definition \ref{str}, generalizing \cite{Eells61}.
The following special case can also be applied to compute the fundamental group of the symplectomorphism group of small rational 4-manifolds:
  Let $\mX_0$ be a contractible  paracompact  Fr\'echet manifold which  is evenly stratified by $\{\mX_{2i}\}_{i=0}^n$ as in
 Definition \ref {str} at the first 2 levels.
  Then we have the following duality on
 the cohomology of $\mX_0 -\mX_2$ and $\mX_2 - \mX_4$, with coefficient $G$ being any Abelian group:
 $$ H^{1}(\mX_0 - \mX_2;G)\cong H^{0}(\mX_2  - \mX_4;G).$$
\end{rmk}

\subsection{Characterizing \texorpdfstring{$\mJ_{open}$}{Lg} via  subsets of \texorpdfstring{$\mathcal S_{\w}^{-1}$}{Lg}}\label{Dopen}

Let $X_{n+1}$ continue to be $S^2\times S^2  \# n{\overline {\CC P^2}}, n\leq 4$.
We next give a characterization of $\mJ_{open}$ using a subset of classes $\mfD'_{n+1} \subset \mS_{\w}^{-1}$. Note that the subset $\mfD'_{n+1}$ here is different from a labelling set $\mC$ in Definition \ref{fine decomposition} since  $\mC\subset \mS_{\w}^{\leq -2}.$
Recall that in  Definition \ref{jd}, for $\mfD \subset \mS_{\w}^{-1}$, $\mJ^{\mfD}$denotes the set of $J$ such that  each class in $\mfD$   has a  $J$-holomorphic representative. Clearly,  if $\mfD \subset \mfC,$ then $ \mJ^{\mfC} \subset \mJ^{\mfD}.$

\begin{lma}\label{-1open}
Let $X_{n+1}$ be  $S^2\times S^2  \# n {\overline {\CC P^2}}, n\leq 4$ with a reduced symplectic form $\w$.
 For the following configuration $\mfD'_{n+1} \subset \mS_{\w}^{-1}(X_{n+1})$,

  \begin{itemize}
\item   $\mfD'_2= \{B-E'_1\},$
\item $\mfD'_3= \{ B-E'_1, F-E'_1, E'_1 \},$
\item
$\mfD'_4= \{  B+F-E'_1-E'_2-E'_3, B-E'_1, F-E'_1, E'_2\},$
\item $\mfD'_5 = \{B+F-E'_2-E'_3-E'_4, B-E'_1,F-E'_1,E'_2,E'_3\},$
 \end{itemize}
 we have $\mJ^{\mfD'_{n+1}}=\mJ_{open}$.
 Consequently, if $\mfC \subset \mS_{\w}^{-1}$  contains $\mfD'_{n+1}$, then  $\mJ^{\mfC}=\mJ_{open}$.
\end{lma}

\begin{proof}
By the 4-th bullet of Lemma \ref{open and -1} we just need to check that any class in $\mS_{\omega}^{\leq -2}$
 intersects at least one class in the configuration $\mfD_{n+1}'$ negatively.
By  Proposition  \ref{negative sphere}, $\mS_{\omega}^{\leq -2}$ is contained in  $\mB \coprod \mF \coprod \mE$. It is straightforward to check each class in $R_{n+1}=\mB^{-2} \coprod\mF^{-2} \coprod \mE^{-2}$ pairs negatively with at least one class  in the configuration $\mfD_{n+1}'$. Note that $R_{n+1}$ is the root system.

For $X_2$ and $X_3$, by Proposition \ref{negative sphere}, any class in $\mB^{\leq -3}$  can be written as $B-qF-r_iE'_i, q\geq1, i\leq k, r_i\in\{0,1\}$,  and
$(B-qF-r_iE'_i)\cdot (B-E'_1)=-q-r_1<0.$

For $X_4$, the only  class in $\mF^{\leq -3}$
 is $F-E'_1-E'_2-E'_3$, and the only class in $\mE^{\leq -3}$  is    $E'_1-E'_2-E'_3$.  Each of them pairs with $B+F-E'_1-E'_2-E'_3$ negatively.
And any class in $\mB^{\leq -3}$  can be written as either $B-E'_1-E'_2-E'_3$ or $B-kF-r_iE'_i, k\geq1, r_i\in\{0,1\}$, by Proposition \ref{negative sphere}. And $(B-E'_1-E'_2-E'_3)\cdot (B+F-E'_1-E'_2-E'_3)<0,$
$(B-kF-r_iE'_i)\cdot(B-E'_1)=-k-r_1<0.$

For $X_5$, $\mF^{\leq -3}=\{ F-E'_1-E'_2-E'_3-E'_4,
 F-E'_i-E'_j-E'_k, 1\leq i<j<k\leq 4 \}$, and
 $\mE^{\leq -3}=\{E'_1-E'_2-E'_3-E'_4, E'_i-E'_j-E'_k, 1\leq i<j<k\leq 4 \}$.
 Each of them pairs with $B+F-E'_2-E'_3-E'_4$ negatively.
And any class in $\mB^{\leq -3}$  can be written as either $B-E'_i-E'_j-E'_k,  1\leq i<j<k\leq 4 ,$    or $B-kF-r_iE'_i, k\geq1, r_i\in\{0,1\}.$ And $(B-E'_i-E'_j-E'_k)\cdot (B+F-E'_2-E'_3-E'_4)<0,$
$(B-kF-r_iE'_i)\cdot (B-E'_1)=-k-r_1<0.$


Therefore any sphere class with square less than $-1$ cannot have a simultaneous  $J$-holomorphic representative with the set $\mfD.$

Finally, the equality $\mJ^{\mfC}=\mJ_{open}$ follows from $\mJ_{open}=\mJ^{\mfD'_{n+1}}\supset  \mJ^{\mfC} \supset \mJ_{open}$, where the first inclusion follows directly  from Definition
\ref{jd} and the last inclusion is the 3rd bullet of Lemma \ref{open and -1} .
\end{proof}

In terms of the reduced basis $\{H, E_1, \cdots, E_{n+1}\}$  $X_{n+1}={\CC P^2} \# (n+1)\overline{ {\CC P^2}}$, we have

\begin{itemize}
\item   $\mfD'_2=\{E_1\}$ for ${\CC P^2} \# 2{\overline {\CC P^2}}$,
\item $\mfD'_3=\{H-E_1-E_2, E_1, E_2 \}$ for ${\CC P^2} \#  3{\overline {\CC P^2}}$,
\item  $\mfD'_4=\{ H-E_3-E_4, E_1, E_2, E_3 \}$ for ${\CC P^2} \#  4{\overline {\CC P^2}}$,
\item
$\mfD'_5=\{ 2H-E_1-E_2-E_3-E_4-E_5,  E_1, E_2,  E_3, E_4 \}$ for ${\CC P^2} \#  5{\overline {\CC P^2}}$.
 \end{itemize}




We mention a couple of  related facts that will not be used in the paper.  First of all, $\mfD'_{n+1}$ is minimal in the sense that, for any {\it proper} subset $\mfC'$ of  $\mfD'_{n+1}$, $\mJ^{\mfC'}$ is strictly bigger than $\mJ_{open}$. Notice that $\mfD'_{n+1}$ does not contain the smallest area class $E_{n+1}$ (cf.  Lemma \ref{minemb}).
Secondly,  there are subsets $\mT \subset \mS_{\w}^{-1}$ not containing $\mfD'_{n+1}$ such that  $\mJ^{\mT}=\mJ_{open}$ .


\section{The rank of  \texorpdfstring{ $\pi_1(Symp_h(X, \omega))$ }{Lg}when  \texorpdfstring{$\chi(X)\leq 7$ }{Lg} }\label{leq4}
In this section,   for a symplectic rational surface $(X, \w)$ with $\chi(X)\leq 7$,   we prove Theorem \ref{rank}, which includes both Theorem \ref{sum} and Corollary \ref {4Q}.
 Note that  $Symp_h(X,\omega)$ is path connected in this case.

As alluded in the introduction, we will treat the cases $\chi(X)\leq 4$ and $5\leq \chi(X) \leq 7$ separately.
  In the former case, we will prove Theorem \ref{rank} by combining   various known computations of $\pi_0(Symp(X, \omega))$  and  $\pi_1(Symp(X, \omega))$ scattered in the literature.
In the latter case we will use  an appropriate configuration of exceptional spheres  in \cite{Eva11} and \cite{LLW15} to reduce the computation of $\pi_1(Symp(X, \omega))$
to $H_1(\mJ_{open};\ZZ)$ and then apply
Corollary \ref{Jopen}.





\subsection{Configurations of exceptional spheres when \texorpdfstring{$\chi(X)=5, 6, 7$}{Lg}}

We recall   the notion  of a configuration of exceptional spheres (eg.  \cite{LLW15}):

\begin{dfn}\label{configuration}
A finite collection of transversally intersected symplectic spheres $D=\bigcup D_i \subset X$ is called a (tree) configuration if
\begin{itemize}
 \item for any pair  $i,j$ with $i\ne j$, $[D_i]\ne [D_j]$ and $[D_i]\in \mathcal S_{\omega}^{-1}$;
 \item $D_i$'s are simultaneously $J$-holomorphic for some $J \in \mathcal J_{\omega}$;
\item  $D=\bigcup D_i$ is connected and its graph is a tree.
\end{itemize}
We will often use $D$ to denote such a configuration. The homological type of $D$ refers to the set of homology classes $\mfD=\{[D_i]\}$.
Further, a   configuration  is called
 {\bf standard} if the components intersect
$\omega$-orthogonally
 at every intersection point of the configuration, {\bf heavy } if $\mJ^{\mfD}=\mJ_{open}$,    and {\bf full} if $H_2(X,D;\RR)=0$.

 Denote by $\msD$  the space of configurations associated to a fixed configuration $D$, and by $\msD_0$  the subspace of standard configurations.
 \end{dfn}

We then introduce  various  subgroups of $Symp_h(X, \omega)$ associated to a fixed configuration $D$ that appear  in the diagram \eqref{summary}:
\begin{itemize}
\item  $Symp_h(X, \omega)\subset Symp(X, \omega)$ is the subgroup acting trivially on $H_2(X; \ZZ)$.
\item  $Stab(D) \subset Symp_h(X, \omega)$ is the subgroup that fixing  $D$ setwisely, but not necessarily pointwisely.
\item  $Stab^0(D)\subset Stab(D)$ is the subgroup   fixing  $D$ pointwisely.
\item   $Stab^1(D) $ is the  subgroup of $ Stab^0(D)$ fixing  $D$ pointwisely and acting trivially on the normal bundles of its components.
\item  $Symp_c(U)$ is the subgroup of $Stab^1(D)$ that have compact support in $(U, \omega|_U)$, where $U=X\setminus D$.
\end{itemize}

There are also two local groups  $Symp(D)$  and $\mG(D)$ that we will describe soon.

Recall that it suffices to assume the symplectic form is reduced since
any  symplectic form is diffeomorphic to a reduced form by Lemma \ref{reduceform} and
diffeomorphic symplectic forms have homeomorphic symplectomorphism groups.
For a reduced symplectic form on a rational surface with $\chi(X)=5, 6, 7$, consider the configurations with  the following homology types:
 \begin{itemize}
\item  ${\CC P^2} \# 2{\overline {\CC P^2}}$, $\mfD_2=\{H-E_1-E_2, E_1, E_2\}$.
\item ${\CC P^2} \#  3{\overline {\CC P^2}}$,  $\mfD_3=\{ H-E_1-E_2, H-E_2-E_3, E_1, E_2, E_3 \}$.
\item ${\CC P^2} \#  4{\overline {\CC P^2}}$,
$\mfD_4=\{ H-E_1-E_2,  H-E_3-E_4,  E_1,  E_2,  E_3, E_4\}.$
 \end{itemize}


\begin{lma} \label{l:heavy}
$\mfD_{k}$ is heavy and full.
\end{lma}

\begin{proof} $\mfD_k$ is full since it contains  a basis of $H_2(X_k;\ZZ)$.
Since $\mfD_{k}\supset \mfD'_k$, by Lemma \ref{-1open},  $\mfD_k$ is heavy.
\end{proof}

For such a configuration,
we have the following lemma about the weak homotopy type of $\msD, \msD_0$:

\begin{lma}\label{CeqJ}
For a  configuration $D$ with homology type $\mfD_k, k=2,3,4$, we have the diagram of homotopy fibrations \eqref{summary}. In particular,
both $\msD_0$ and $\msD$ are  weakly homotopic equivalent to $\mJ_{open}$.
\end{lma}

\begin{proof}
This lemma has been established in \cite{Eva11} and \cite{LLW15}  (see also \cite{LP04}),
except for the claim  that $\msD_0$ is weakly homotopic equivalent to $\mJ_{open}$.
It is shown   in  \cite{Eva11}
that $\msD_0$ is weakly homotopic to $\msD$, so it suffices to show that   $\msD$ is weakly homotopic to $\mJ_{open}$.

The crucial fact is that $\mJ_{open}(X_k, \w)=\mJ^{\mfD_k}$ for $k=2, 3, 4$ by Lemma \ref{l:heavy}. So we just need to show that
   $\mJ^{\mfD_k}$   is weakly homotopic to $\msD$.  Notice that  $ \pi: \mJ^{\mfD_k} \rightarrow \msD$
is a smooth surjective submersion (proof verbatim as in Proposition 4.8 in \cite{LP04}). Since the space of almost complex structures making a configuration pseudo-holomorphic
  is a weakly contractible subset of  $\mJ^{\mfD_k}$ (\cite{Eva11}),  the preimage of the map $ \pi$ is weakly contractible everywhere. Therefore $\pi$ is a fibration with weakly contractible fibers by \cite{Mei02}, and we have the desired weak homotopy equivalence between $\mJ^{\mfD_k}$ and $\msD$.
\end{proof}




We then analyze various groups associated with $D$ that appear in \eqref{summary}, starting from the local groups $Symp(D)$  and $\mG(D)$.

\subsubsection{ $Symp(D)$  and $\mG(D)$ }
Given a configuration of embedded
symplectic spheres $D=D_1\cup\cdots\cup D_n\subset X$, let
$I$ be the set of intersection points of the components and
 $k_i$  the cardinality of  $I\cap D_i$.

 The group $Symp(D)$ is defined to be  the direct product
$\prod_{i=1}^nSymp(D_i,I\cap D_i)$.
Here $Symp(D_i,I\cap D_i)$ denotes the group of symplectomorphisms of  $D_i\cong S^2$
fixing   the  intersection points $I\cap D_i$. 
Since  $Symp(S^2)$ acts transitively on $N$-tuples of distinct points in $S^2$ for any $N$ we can write $Symp(D_i, I\cap D_i)$ as
$Symp(S^2, k_i)$ and
\begin{equation}\label{sympc}
 Symp(D)= \prod_{i=1}^nSymp(S^2,k_i).
\end{equation}
As shown in \cite{Eva11}:
\begin{equation} \label{sympk}
Symp(S^2,1)\sim S^1;\hspace{5mm}
Symp(S^2,2)\sim S^1;\hspace{5mm}
Symp(S^2,3)\sim {\star};
\end{equation}
where $\sim$ means homotopy equivalence. In the rest of this paper, we will constantly use $\sim$ for weak homotopy equivalence.

Similarly, the  group  $\mG(D)$ (also called the symplectic gauge group) is defined to be the product $\prod_{i=1}^n\mG_{k_i}(D_i)$. Here $\mG_{k_i}(D_i)$ denotes
the group of gauge transformations of the symplectic normal bundle
to $D_i\subset X$, which are the  identity at each of the $k_i$ intersection points.
Also as shown in \cite{Eva11}:
\begin{equation} \label{gau}
\mG_0(S^2) \sim S^1 ; \hspace{5mm}  \mG_1(S^2) \sim \star; \hspace{5mm}  \mG_k(S^2) \sim \ZZ^{k-1},\ k>1.
\end{equation}
Since we assume the configuration is connected, each $k_i\geq 1$. Thus by \eqref{gau}, we have
\begin{equation}\label{mG}
 \pi_0(\mG(D)) \cong \oplus_{i=1}^n \pi_0(\mG_{k_i}(S^2))\cong \oplus_{i=1}^n  \mathbb Z^{k_i-1}.
\end{equation}

By \eqref{sympc} and \eqref{gau},  for  $D$ with homology type $\mfD_k$, we have

  \begin{itemize}
\item for  ${\CC P^2} \# 2{\overline {\CC P^2}}$, $Symp(D)\sim (S^1)^3,\mG(D)\sim \ZZ$,
\item for ${\CC P^2} \#  3{\overline {\CC P^2}}$,  $Symp(D)\sim (S^1)^5,\mG(D)\sim \ZZ^3$,
\item for ${\CC P^2} \#  4{\overline {\CC P^2}}$,
 $Symp(D)\sim (S^1)^4,\mG(D)\sim \ZZ^4$.
 \end{itemize}


\subsubsection{ $Symp_c(U), Stab^1(D),$ $Stab^0(D)$ and  $ Stab(D)$}

 Recall Proposition 3.3 in \cite{LLW15}:

\begin{lma}
 For a standard configuration $D$ with homology type $\mfD_k$,
 $Symp_c(U)\sim   \star .$
\end{lma}

Consider  the following fibration portion of the diagram \eqref{summary}:
$$Symp_c(U)\sim Stab^1(D)\to Stab^0(D)\to \mG(D).$$
It follows   that $Stab^0(D)\sim \mG(D).$


\begin{prp}\label{stab}
For a standard configuration $D$ with homology type $\mfD_k$, we have
\begin{itemize}
\item for  ${\CC P^2} \# 2{\overline {\CC P^2}}$,
$ Stab(D)  \sim\TT^2,$
\item for  ${\CC P^2} \# 3{\overline {\CC P^2}}$,
$ Stab(D)  \sim\TT^2,$
\item for ${\CC P^2} \# 4{\overline {\CC P^2}}$,
$ Stab(D)  \sim\star.$
\end{itemize}
\end{prp}

\begin{proof}
Consider 
 the following fibration portion of the diagram \eqref{summary}:
 \begin{equation}
\begin{CD}
\mG(D)\sim Stab^0(D) \to  Stab(D) \to Symp(D).
\end{CD}
\end{equation}
With the  computation of $\mG(D)$ and $Symp(D)$ above,
we have the following homotopy  fibrations for $k=2,3,4$ respectively:
$$\ZZ \to Stab(D) \to (S^1)^3,\quad
\ZZ^3 \to Stab(D) \to (S^1)^5,\quad
\ZZ^4 \to Stab(D) \to (S^1)^4.$$
We need  to understand  the connecting homomorphism
 $\pi_1(Symp(D)) \rightarrow \pi_0 (Stab^0(D))=\pi_0(\mG(D)).$
It follows from Lemma 2.9 in \cite{LLW15} that
the connecting homomorphism
 is surjective in each case.
Therefore  $Stab(D)$ is path connected and $\pi_i(Stab(D))=0$ for $i\geq 2$  in each case, and $\pi_1(Stab(D))=\ZZ^2, \ZZ^2, 0$ when $k=2,3, 4$.
Clearly,   $ Stab(D)\sim
\TT^2,
\TT^2,
\star$ for $k=2, 3,4$ respectively.
\end{proof}

We remark that, for the monotone case, $Stab(D)$
is computed in  \cite{Eva11}.



\subsubsection{$ \pi_1 (\msD_0)=\pi_1(\mJ_{open})=\ZZ^{N_{\w}}$ and the injection $\pi_1(Stab(D) )\to  \pi_1(Symp_h(X,\w))$}
Now we further move on to the last  portion of the diagram \eqref{summary}:
\begin{equation} \label{last}
  \begin{CD}
  Stab(D) @>>> Symp_h(X, \w) @>>> \msD_0.
\end{CD}
 \end{equation}
We will first  compute $ \pi_1 (\msD_0)$.


\begin{lma}\label{Abel}
For  $(X_k, \w), k=2, 3,4$, the homomorphism $\pi_1 (Symp_h(X, \w))\to  \pi_1 (\msD_0)$ is surjective and
$ \pi_1 (\msD_0)=\pi_1(\mJ_{open})$ is a free Abelian group with rank  ${N_{\omega}}$.
\end{lma}
\begin{proof}
Note that $Stab(D)$ is path connected by Proposition \ref{stab}. So  the homomorphism $\pi_1 (Symp_h(X, \w))\to  \pi_1 (\msD_0)$ associated to the homotopy fibrarion
\eqref{last} is surjective.
 Since $Symp_h(X, \w)$ is a Lie group, $\pi_1 (Symp_h(X, \w))$
 is Abelian and hence $\pi_1(\msD_0) $ is an Abelian group as well.
 Then $\pi_1(\msD_0) = H_1(\msD_0,\ZZ)$ by the Hurewicz Theorem. By Lemma \ref{CeqJ} and Corollary \ref{Jopen}, we have  $H_1(\msD_0;\ZZ) = H_1(\mJ_{open};\ZZ)=\ZZ^{N_{\w}}$.
\end{proof}


 We next analyze the associated homomorphism $g: \pi_1(Stab(D) )\to  \pi_1(Symp_h(X,\w))$.

\begin{lma}\label{injection}
The homomorphism $g: \pi_1(Stab(D) )\to  \pi_1(Symp_h(X,\w))$ is injective.
Hence we have the exact sequence
\begin{equation}\label{pi1 sequence}
0\to \pi_1(Stab(D) )\overset{g}\to  \pi_1(Symp_h(X,\w))\to \ZZ^{N_{\omega}} \to 0.
\end{equation}

\end{lma}

\begin{proof}

When $k=4,$ the injectivity is clear  since $\pi_1(Stab(D) )$ is trivial in this case.

For $\CC P^2  \# k{\overline {\CC P^2}}, k=2,3$, by Proposition \ref{stab}, $Stab(D)\sim \TT^2$.
  And the homology classes of the configuration $D$ can be realized as the boundary of the moment polytope.
  we can assume that  $D$ is a toric divisor, see also Remark \ref{toric-2}. Namely,  there is  a Hamiltonian $\TT^2$ action  on $(X, \w)$ fixing $D$. In particular, we have the  inclusions  $\TT^2\subset Stab(D)\subset Symp_h(X,\w)$.
Consider the induced map for $\pi_1$:
$$\ZZ^2  \overset{f} \rightarrow   \ZZ^2 \overset{g} \rightarrow  \pi_1 (Symp_h(X, \w)).$$
By Theorem 1.3 and Theorem 1.25 in \cite{MT10}, the homomorphism $\iota =g\circ f$ is injective.
Therefore $Im(\iota)=g(Im(f))$ is a rank 2 free Abelian group, which implies that  $g: \ZZ^2\to \pi_1 (Symp_h(X, \w))$ must be injective as well.

Finally, the exactness of the sequence \eqref{pi1 sequence} follows from Lemma \ref{Abel}.
\end{proof}

\subsection{Proof of Theorem \ref{sum} and Corollary \ref{4Q}}
We are ready to prove the following result, which includes both Theorem \ref{sum} and   Corollary \ref{4Q}. Notice that $\pi_1(Symp(X,\omega))$ is always the same as $\pi_1(Symp_h(X,\omega)),$ since  $Symp(X,\omega)$ is the extension of $Symp_h(X,\omega)$ by the homological action, which is discrete.

\begin{thm}\label{rank}
 If $(X, \omega)$ is a symplectic rational surface with  $\chi(X)\leq 7$,   then
 \begin{equation} \label{pi1}
 \pi_1(Symp_h(X,\omega))= \ZZ^{N_{\omega}} \oplus \pi_1(Symp_h(X,\omega_{mon})). \end{equation}
  $\pi_0 (Symp(X,\omega))$ and $\pi_1(Symp(X,\omega))$ are invariant on each open face of $P(X)$ and
vary between neighboring open faces.
Moreover,  if we define $Q(X)=\frac12(\chi(X)-2)(\chi(X)-3)$, then we  have the formula
\begin{equation}\label{4q}
r^+[\pi_0(Symp(X,\omega))]+  Rank [\pi_1(Symp(X,\omega))]=Q(X).
\end{equation}
Here $r^+[\pi_0(Symp(X,\omega))]$ means the number of positive roots of the Weyl type group $\pi_0(Symp(X,\omega)).$
\end{thm}
\begin{proof} There are statements in the theorem, the identity \eqref{pi1}, the stability statement, and the identity \eqref{4q}.

\bigskip

$\bullet$ We first establish  (\ref{pi1}).
For $X= {\CC P^2} \# k\overline{\CC P^2}, k=2,3,4$,
 by Lemma \ref{injection},
$$\pi_1(Symp_h(X,\omega))=  \pi_1(Stab(D)) \oplus \ZZ^{N_{\omega}}.$$
Notice that, in the monotone case, $\mJ_{open}=\mJ_{\w_{mon}}$ and so  the space $\msD_0\sim \mJ_{open} \sim  \mJ_{\w_{mon}}$ is contractible by Lemma \ref{CeqJ}.
Hence $ Symp(X,\omega_{mon})\sim Stab(D)$ from the fibration \eqref{last} and  we have  \eqref{pi1} in this case.

 For  $ {\CC P^2}, {\CC P^2} \#\overline{\CC P^2},  S^2\times S^2$,
  \eqref{pi1}  follows from  the fact that $N_{\w_{mon}}=0$  and Table \ref{0123form} below summarizing \cite{Gro85,Abr98,AM00,LP04}, where $\w_{gen}$ denotes a non-monotone
  symplectic form. Note that the $\pi_0, \pi_1$ of $\CC P^2$, monotone $S^2\times S^2$ and monotone ${\CC P^2} \#\overline{\CC P^2}$ are completely done by \cite{Gro85}; therest cases of $S^2\times S^2$ are done by \cite{Abr98} Theorem 0.1 and \cite{AM00} Theorem 1.1; and the rest cases of  ${\CC P^2} \#\overline{\CC P^2}$
are given by \cite{AM00} Theorem 1.1. The other information in the table are easy homological information.

\begin{table}[ht]
\begin{center}
\resizebox{\textwidth}{!}{\begin{tabular}{||c c c c c  c c||}
\hline\hline
X& $R(X)$  & $Symp_h(X,\w_{mon})$ & $\pi_1(Symp_h(X,\w_{mon}))$ & $\pi_1(Symp_h(X,\w_{gen}))$ & $N_{\w_{gen}}$ & $Q(X)$  \\ [0.5ex]
\hline\hline
$\CC P^2$ & $\emptyset$ & $\sim PU_3$  &$\ZZ_3$    & N/A & N/A&  0\\
\hline
${\CC P^2} \#\overline{\CC P^2},$& $ \emptyset$  &$\sim U_2$ & $\ZZ$     & $\ZZ $& 0 &$1$\\
\hline
$S^2\times S^2$& $\aA_1$ &  $\sim SO_3\times SO_3$ &$\ZZ_2\oplus \ZZ_2$ &$\ZZ\oplus \ZZ_2\oplus \ZZ_2$ & 1& $  1$\\
\hline\hline
\end{tabular}}
\caption{$\chi(X)\leq 4$}\label{0123form}
\end{center}
\end{table}



\bigskip

$\bullet$ Next,  we verify the second statement that $\pi_0 (Symp(X,\omega))$ and $\pi_1(Symp(X,\omega))$ are stable on each open face of $P(X)$ and
vary between neighboring open faces.
 For $X_3$ and $X_4$, it  follows from the discussions in Section \ref{Lagroot} (cf. Proposition \ref{MLS}) which show  that both $N_{\w}$ and $\Gamma_L$ stabilize within an open face and vary when moving to  a neighboring open face.

For the remaining case with $\chi\leq 5$,
it follows from the discussions and Figure 2  in Section \ref{chileq5}.  Notice that any  normalized reduced symplectic class on such a manifold appears in Figure \ref{2c}, which we redraw here (but with $M_1$ and
$M_2$ removed).
\begin{figure}[ht]
\begin{center}
 \begin{tikzpicture}
[scale=3.8,
axis/.style={->,blue,thick},
vector/.style={-stealth,red,very thick},
]
\coordinate (O) at (0,0,0);(-0.1,0,0)node{$O$};
\draw[axis] (0,0,0) -- (1,0,0) node[anchor=north east]{$c_1$};
\draw[axis] (0,0,0) -- (0,1,0) node[anchor=north west]{$c_2$};
\draw[red, thick]  (0.4,0.4,0)--(0,0,0) node[anchor=north ]{$c_1=c_2$};
\draw[green,thick] (0.4,0.4,0)--(0.8,0,0);
\draw[white] (0,0,0)--(0.8,0,0);
\node[draw] at(0.8,-0.15,0){$A:(1,0)$};
\node[draw] at(0.4,0.5,0){$B:(\frac12,\frac12)$};
\draw[fill=green] (0.4, 0.4, 0) circle[radius=.02 ];
\draw[fill=white] (0.8, 0, 0) circle[radius=.02 ];
\end{tikzpicture}
\caption{$P(X)$ when $\chi(X)\leq 5$ }
\end{center}
\end{figure}

The statement is trivial for $X_0= {\CC P^2}$ since $O$  is the only normalized reduced class.

For $X_1={\CC P^2} \#\overline{\CC P^2},$ $\pi_0(Symp(X_1,\w))=\{1\}$ and $\pi_1(Symp(X_1,\w))=\ZZ$   on the 1-dimensional  open face $OA$,   which is   the whole cone $P_1$.

For $X_2={\CC P^2} \#2\overline{\CC P^2},$ the cone $P_2$  is the disjoint union of the two open faces $OB$ and $OAB$.  On the 1-dimensional  open face $OB$, $\pi_0(Symp(X_2,\w))=\ZZ_2$ and $\pi_1(Symp(X_2,\w))=\ZZ^2$.
On the open face $OAB$,  $\pi_0(Symp(X_2, \w))=\{1\}$ and $\pi_1(Symp(X_2,\w))=\ZZ$.

For $S^2\times S^2,$ the cone is the half closed and half open interval $[B, A)$, where $B$ is the monotone class. So the conclusion can be read off from Table \ref{0123form} above.

\bigskip

$\bullet$ Finally,  we establish  \eqref{4q}.
Note  that the number of Lagrangian spherical classes  $N_{\w,L}$ is the same as $r^+[\pi_0(Symp(X,\omega))]$.
 And  by  equation \eqref{SL'} which is valid   when $\chi(X)\leq 7,$ we have   $N_{\w}+ r^+[\pi_0(Symp(X,\omega))] = |R^+(X)|$.   Then \eqref{4q}
  follows from the previously established \eqref{pi1} and the formula
  \begin{equation}\label{mon-R}
  Rank [\pi_1(Symp(X,\omega_{mon}))]+ |R^+(X)|=\frac{1}{2}(\chi(X)-2)(\chi(X)-3)
  \end{equation}  when $\chi(X)\leq 7$.
  The  formula \eqref{mon-R} can be directly verified case by case.  When  $\chi(X)\leq 4$,  it follows  from Table \ref{0123form} above.  When $\chi(X)=5, 6, 7$, we have
  $|R^+(X)|=1, 4, 10$ and $  Rank [\pi_1(Symp(X,\omega_{mon}))]=2, 2, 0$  respectively, and hence \eqref{mon-R} also holds in these cases.
\end{proof}

\begin{rmk}\label{s1gen}
In \cite{AP13}, the computation of $\pi_1(Symp_h(X,\w)$ for any given form on
3-fold blow up of ${\CC P^2}$ is given.  There the strategy is counting torus
(or circle) actions. And a generating set of $\pi_1(Symp_h(X,\w)$ is given using
circle action. Note that our approach gives another (minimal) set of $\pi_1(Symp_h(X,\w)$.
We give the correspondence of the two generating sets:
 By Remark \ref{toric-2}, any -2 symplectic sphere in 3 fold blow up of ${\CC P^2}$, there
is a semi-free circle $\tau$ action having this -2 symplectic sphere as fixing locus, where $\tau$ is a generator of $\pi_1(Symp_h(X,\w)$.

\end{rmk}

\subsubsection{Some remarks}

When $\chi(X)\geq 8$, due to the fact that $\pi_0(Symp_h(X,\w))$ could be non-trivial, we consider the quantity
$$r^+[\pi_0(Symp(X,\omega))]+  Rank [ \pi_1(Symp_h(X,\omega))]-Rank [\pi_0(Symp_h(X,\omega))],$$
which is conjectured to be a pure topological quantity $Q(X).$
Note that this quantity $Q(X)$ is $\frac12(\chi(X)-2)(\chi(X)-3)$,  which is the same as in Theorem \ref{sum} and   Corollary \ref{4Q} when $\chi(X)\leq7.$
We  prove in \cite{LLW16} that when $\chi(X)=8$ this is ture, and $Q(X)=15$.

In \cite{McD08} Corollary 6.9, McDuff gave a blowup approach to obtain an upper bound of the rank
$\pi_1(Symp(X,\omega))$  for  a symplectic rational 4 manifold $(X, \w)$.
We can show that our Theorem \ref{rank} actually attained the upper bound via her approach
when $\chi(X)\leq 7$.
 In the follow-up paper  \cite{LLW16}, we will explore a generalization of  Corollary 6.9 in \cite{McD08} to estimate the rank of
$\pi_1(Symp(X_5,\omega))$ where $X_5$ is the 5-point blowup of the projective plane. Although the upper bound is not
always optimal, it is crucial for the rank computation.
As an potential application of computing the rank of $\pi_1(Symp)$, one can follow \cite{Pol98} to compare the rank with the number of Hamiltonian isotopy classes of monotone Lagrangian torus  to approach the Hofer diameter conjecture, which says that the diameter of $Ham(X,\w)$ with respect to the canonical bi-invariant Finsler metric (called Hofer metric) is infinity.

There is also a general lower bound of the rank of $\pi_1(Symp(X,\w))$ given by counting circle actions in \cite{Ped15}.
In \cite{AP13},   when $\chi(X)=6$, a generating set  of $\pi_1(Symp(X,\w))$ as an Abelian group is given in terms of circle actions,
and this set is shown to also generate  the rational homotopy Lie algebra of  $Symp(X,\w)$.
Notice that any symplectic form on a  rational 4 manifold $X$ with  $\chi(X) \leq 6$ is a toric from.
Our approach gives another (minimal) generating set of $\pi_1(Symp(X,\w))$ in terms of  $(-2)$-symplectic spheres.
When $\chi(X)=6$,  the two generating sets correspond via
 Remark \ref{toric-2}: any $(-2)$-symplectic sphere in $(X, \w)$ with $\chi(X)=6$ arises as a fixed point component of
a Hamiltonian  circle action.


\begin{rmk}\label{anj}
Anjos-Eden (\cite{Anjos, AE17}) further studied some toric cases for the  4-fold blow up of ${\CC P^2}$, including the open face $M_4OBC$  in Table \ref{4form}. In particular, they
compute the rank of $\pi_1(Symp(X,\w))\otimes \QQ$ in this case and further   prove that the group  generators of $\pi_1(Symp(X,\w))\otimes \QQ$  generate the entire rational homotopy Lie algebra of $Symp(X,\w)$.


\end{rmk}

\subsubsection{Explicit computation of  \texorpdfstring{$\pi_1(Symp_h(X,\w))$  for $X={\CC P^2} \# 4\overline{\CC P^2}$}{Lg}}
Finally we apply  Theorem \ref{rank} to explicitly compute $\pi_1(Symp_h(X,\w))$ on each  open face of the normalized reduced cone $P_4$.
Recall that by Proposition \ref{nrsc} and \ref{MLS},
the normalized reduced cone of  ${\CC P^2} \# 4\overline{\CC P^2}$ is convexly generated by 4 rays $\{M_4O,M_4A,M_4B,M_4C\}$, with  $M_4=(\frac13,\frac13,\frac13,\frac13),$ $O=(0,0,0,0),$ $A=(1,0,0,0),$
$B=(\frac12,\frac12,0,0),$ $C=(\frac13,\frac13,\frac13,0)$.
The results are listed in the following  table. Note that the result on any open face contains the letter ``A" is new (the other cases are also obtained  by \cite{AE17}). Recall that in the form, $\lambda=c_1+c_2+c_3,$ as in section \ref{3to8}.


\begin{table}[ht]
\begin{center}
\resizebox{\textwidth}{!}{\begin{tabular}{||c c c c c  ||}
\hline\hline
open face& $\Gamma_L$  & ${N_{\omega}}$ & $\pi_1(Symp_h(X,\w))$ &$\omega$ area  \\ [0.5ex]
\hline\hline
Point $M_4$& $\aA_4$ & 0& trivial &   monotone, $ \lambda=1; c_1=c_2=c_3=c_4 $\\
\hline
 $M_4$O& $\aA_3$ & 4& $\ZZ^4$& $  \lambda<1; c_1=c_2=c_3=c_4 $\\
\hline
$M_4$A& $\aA_3$ & 4& $\ZZ^4$&  $\lambda=1;c_1>c_2=c_3=c_4 $\\
\hline
$M_4$B&$\aA_1\times \aA_2$  &6& $\ZZ^6$& $\lambda=1;c_1=c_2>c_3=c_4 $ \\
\hline
$M_4$C&$\aA_1\times \aA_2$   &6& $\ZZ^6$& $\lambda=1;c_1=c_2=c_3>c_4 $ \\
\hline
$M_4$OA&$\aA_2$  &7& $\ZZ^7$& $ \lambda<1;  c_1>c_2=c_3=c_4$\\
\hline
$M_4$OB& $\aA_1\times \aA_1$ &8& $\ZZ^8$& $ \lambda<1; c_1=c_2>c_3=c_4$ \\
\hline
$M_4$OC&$\aA_2$   &7& $\ZZ^7$&$ \lambda<1; c_1=c_2=c_3>c_4 $\\
\hline
$M_4$AB&$\aA_2$  &7& $\ZZ^7$&  $ \lambda=1;c_1>c_2>c_3=c_4$\\
\hline
$M_4$AC&$\aA_1\times \aA_1$ &8& $\ZZ^7$&   $ \lambda=1;c_1>c_2=c_3>c_4  $\\
\hline
$M_4$BC&$\aA_1\times \aA_1$ &8& $\ZZ^7$&  $ \lambda=1;c_1=c_2>c_3>c_4 $  \\
\hline
$M_4$OAB& $\aA_1$ &9& $\ZZ^8$& $\lambda<1; c_1>c_2>c_3=c_4$\\
\hline
$M_4$OAC& $\aA_1$    &9& $\ZZ^9$& $ \lambda<1;   c_1>c_2=c_3>c_4  $\\
\hline
$M_4$OBC& $\aA_1$   &9& $\ZZ^9$&  $ \lambda<1; c_1=c_2>c_3>c_4 $\\
\hline
$M_4$ABC& $\aA_1$   &9& $\ZZ^9$&  $ \lambda=1;   c_1>c_2>c_3>c_4  $\\
\hline
$M_4$OABC& trivial   &10& $\ZZ^{10}$&  $ \lambda<1;c_1>c_2>c_3>c_4 $\\
\hline\hline
\end{tabular}}
\caption{$\Gamma_L$ and  $\pi_1(Symp_h(X,\w))$  for  ${\CC P^2} \# 4\overline{\CC P^2}$}\label{4form}
\end{center}
\end{table}

\newpage

\printbibliography

\end{document}